\titlespacing{\subparagraph}{0em}{0em}{0.5em}
\theoremstyle{definition}
\newtheorem*{remark}{Remark}
\newtheorem*{theorem*}{Theorem}
\newtheorem*{definition*}{Definition}
\newtheorem{theorem}{Theorem}[section]
\newtheorem{thm}[theorem]{Theorem}
\newtheorem{lemma}[theorem]{Lemma}
\newtheorem{prop}[theorem]{Proposition}
\newtheorem{cor}[theorem]{Corollary}
\newtheorem{quest}[theorem]{Question}
\newtheorem{ex}[theorem]{Example}
\newtheorem{rmk}[theorem]{Remark}
\newcommand{\spann}{\mathrm{span}}
\newcommand{\diam}{\mathrm{diam}}
\newcommand{\dist}{\mathrm{d}}
\begin{document}

\title[Non-commuting graphs of $P(L/Z(L))$]{Non-commuting graphs of projective spaces over central quotients of Lie algebras}

\author{Songpon Sriwongsa}
\address{Songpon Sriwongsa \\ Department of Mathematics \\ Faculty of Science \\ King Mongkut's University of Technology Thonburi (KMUTT) \\ 126 Pracha-Uthit Road \\ Bang Mod, Thung Khru \\ Bangkok 10140, Thailand}
\email {\tt songpon.sri@kmutt.ac.th; songponsriwongsa@gmail.com}

\keywords{Non-commuting graph; AC Lie algebras; CT Lie algebras}

\subjclass[2020]{05C25, 17B99}

\begin{abstract}
Let $L$ be a finite-dimensional non-abelian Lie algebra with the center $Z(L)$. In this paper, we define a non-commuting graph associated with $L$ as the graph whose vertex set is the projective space of the quotient algebra $L/Z(L)$,
and two vertices $\spann \{ x + Z(L) \}$ and $\spann \{ y + Z(L) \}$ are adjacent if $x$ and $y$ do not commute under the Lie bracket of $L$. We present several theoretical properties of this graph. For certain classes of Lie algebras, we show that if the non-commuting graphs from two Lie algebras are isomorphic, then these Lie algebras themselves must be isomorphic. Furthermore, we discuss a relation between graph isomorphisms between non-commuting graphs of Lie algebras over finite fields and the size of the algebras.
\end{abstract}

\maketitle

\bigskip

\section{Introduction}


The connection between algebra and graph theory is one of the main focuses of researchers. It leads to many interesting results and questions. In fact, many algebraic structures can be investigated using the properties of graphs, and vice versa. In the study of groups, a graph associated with a group can be constructed in many different ways (see, for example, \cite{B83, BHM90, LW81, N76}). 

For a group $G$ with the center $Z(G)$, the non-commuting graph $\Gamma_G$ whose vertex set is $G \setminus Z(G)$ and two vertices $x$ and $y$ are adjacent if and only if $xy \neq yx$ was first mentioned by P. Erd\"os \cite{N76}. The graph theoretical properties of $\Gamma_G$ were studied in \cite{AAM06}. The authors of this paper also investigated which group properties of two non-abelian groups with the isomorphic non-commuting graphs are identical. The most recent works on the non-commuting graph of a group can be found in \cite{F25, GM23, S24}. In these works, the authors considered the graphs for finite simple groups and nilpotent groups.

In this paper, we consider an analogue of the aforementioned problem in the context of a Lie algebra. 
Let $L$ be a finite-dimensional non-abelian Lie algebra over a field $\mathbb{K}$ and $Z(L)$ a center of $L$. Recently, a non-commuting graph associated with $L$ was introduced in \cite{CES24}. In this graph, the vertex set is $L \setminus Z(L)$, and two distinct vertices are adjacent if and only if they do not commute under the Lie bracket of $L$. The authors also investigated several properties of this graph. 

It is worth noting that for any $x, y \in L \setminus Z(L), 0 \neq \alpha, \beta \in \mathbb{K}$ and $s, t \in Z(L)$
\[
[x, y] \neq 0 \iff [\alpha x + s, \beta y + t] \neq 0.
\]
As a consequence, it is more natural to define a \textit{non-commuting graph} associated with a Lie algebra $L$, denoted by $\Gamma_L$, as the graph whose vertex set $V(\Gamma_L)$ consists of the points of the projective space $P(L / Z(L))$, that is, the one-dimensional subspaces of $L / Z(L)$, and the adjacency condition: two vertices $[x]$ and $[y]$ are adjacent, denoted $[x] \sim [y]$, if $[x, y] \neq 0$, where $[x]$ denotes the one-dimensional space of $L / Z(L)$ spanned by $x + Z(L)$. 

Following the direction of \cite{AAM06}, we investigate the graph properties of $\Gamma_L$ and consider a few questions regarding the isomorphism of two non-commuting graphs. Note that the base field for $L$ is not fixed in this work.
In what follows, we revisit some definitions and important terminologies of graph theory. The reader is referred to \cite{CM09, GR01} for more details.

An \textit{undirected simple graph} is an ordered pair $G = (V, E)$ comprising a vertex set $V$ and an edge set $E$ which is a set of unordered pairs of two distinct vertices in $G$. The {\it order} of $G$ is defined $|V|$.
If $\{x, y \}$ is in $E$, then we say that $x$ and $y$ are \textit{adjacent}, and denoted by $x \sim y$. For a vertex $x$ in $G$, the {\it vertex degree} of $x$ is defined to be the number of vertices adjacent to $x$, and we denote it by $\deg (x)$. 
A {\it path} of length $k$ from a vertex $x$ and a vertex $y$ in $G$ is a sequence of $k + 1$ distinct vertices from $x$ to $y$ such that consecutive vertices are adjacent. A path that ends at the starting vertex is called a \textit{cycle} and the length of the shortest cycle is called the {\it girth} of the graph, denoted by $g(G)$. A {\it triangle} is a cycle of length $3$. 
A graph $G$ is called {\it Hamiltonian} if it has a cycle that visits each vertex exactly once, ending on the same vertex as it began. If $G$ contains a cycle containing all the edges exactly once, then $G$ is called {\it Eulerian}.
The length of the shortest path connecting a vertex $x$ and a vertex $y$ is called the {\it distance} between $x$ and $y$, denoted by $\dist (x, y)$. The {\it diameter} of $G$ is defined as $\diam (G) :=  \displaystyle \max_{x, y \in V} \dist (x, y)$. A graph $G$ is said to be {\it connected} if for any two distinct vertices, there always exists a path between them. 
If all distinct vertices in $G$ are adjacent to each other, then $G$ is said to be {\it complete}. A subgraph of $G$ that is complete is called a {\it clique}.  
The number of vertices in the largest clique in $G$ is called the {\it clique number} of $G$, denoted by $\omega (G)$.
The {\it chromatic number} of $G$, denoted by $\chi(G)$, is the minimum number of colors needed to paint on the vertices such that the adjacent vertices get different colors.
 Recall that an \textit{independent} set is a set of vertices in $G$ such that for every two vertices, there is no edge connecting the two. The maximum size of an independent set in $G$ is called the {\it independence number} of $G$ and denoted by $\alpha (G)$.
 If $V$ can be partitioned into $k$ different independent sets, namely $V_1, V_2, \ldots, V_k$, then $G$ is called a $k$-\textit{partite graph}  and $V_i, i = 1, 2, \ldots, k$, is called a {\it partite set}. A {\it complete $k$-partite graph} is a $k$-partite graph in which there is an edge between every pair of vertices from different partite sets. We say $G$ is a {\it complete multipartite graph} if it is a graph that is complete $k$-partite for some $k \in \mathbb{N} \cup \{\infty\}$.
If every vertex in $G$ has an equal degree $k$, then $G$ is called a $k$-{\it regular graph}. 
The {\it vertex connectivity}, denoted by $\kappa(G)$, of a connected graph $G$ is the smallest number of vertices whose removal disconnect $G$. For a connected graph $G$, a subset $S$ of the vertex set $V(G)$ is called a {\it cut set} if deletion of all vertices in $S$ makes $G$ disconnect. Again, for a subset $S$ of the vertex set $V(G)$, let $N_G[S]$ be the set of vertices in $G$ which are in $S$ or adjacent to a vertex in $S$. Moreover, the set $S$ is said to be a {\it dominating set} if $N_G[S] = V(G)$. The {\it domination number} of a graph $G$, denoted by $\gamma(G)$, is the minimun size of a dominating set of the vertices in $G$.
Finally, a graph $G$ is said to be {\it planar} if it can be embedded in the plane. 

\medskip

{\bf Organization of this paper:} All Lie algebras in this paper are assumed to be finite-dimensional and non-abelian. In Section \ref{section2}, we analyze several graph theoretical properties of our non-commuting graphs. It will be seen that both Lie algebra structures and base fields can affect the graph properties. Finally, we consider questions of what we can say about two Lie algebras whose non-commuting graphs are isomorphic; see Question \ref{isomq1} and Question \ref{isomq2} in Section \ref{ISO}. To simplify the problems, we consider three different classes of Lie algebras: CT Lie algebras, AC Lie algebras, and Lie algebras over finite fields.

\section{Basic properties of non-commuting graphs}\label{section2}

In this section, we investigate several graph properties of the non-commuting graph $\Gamma_L$ associated with a finite-dimensional non-abelian Lie algebra $L$ over a field $\mathbb{K}$. Numerous properties can be derived from a finite graph. Recall that for any subset $S \subseteq L$, the centralizer of $S$ in $L$, denoted by $C_L(S)$, is the subalgebra of $L$ whose elements commute with all elements in $S$, i.e.,
$
C_L(S) = \{y \in L \mid [y, s] = 0 \text{ for all } s \in S \}
$. In particular, for any $x \in L$, 
$
C_L(x) = \{y \in L \mid [y, x] = 0 \}
$.

We observe that for any vertices $[x]$ and $[y]$ of $\Gamma_L$, 
\begin{align*}
    [x] \sim [y] &\iff y \notin C_L(x) \\
    &\iff [y] \notin P(C_L(x) / Z(L)).
\end{align*}

\begin{prop} \label{basic}
    The following statements hold for the graph $\Gamma_L$.
    \begin{enumerate}
        \item $\Gamma_L$ is finite if and only if $\dim(L / Z(L)) = 1$ or
        $\mathbb{K}$ is a finite field.
        \item If $\mathbb{K}$ is finite, then $\Gamma_L$ is regular if and only if for all $x, y \in L \setminus Z(L)$, $\dim (C_L(x)) = \dim (C_L(y))$.
    \end{enumerate}
\end{prop}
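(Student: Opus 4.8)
The plan is to treat the two statements separately, since (1) concerns only the vertex set while (2) requires computing vertex degrees.

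For (1), I would first note that a graph is finite precisely when its vertex set is finite, so everything reduces to counting the points of $P(L/Z(L))$. Writing $m = \dim(L/Z(L))$, a one-dimensional quotient ($m = 1$) has a single one-dimensional subspace, so $|V(\Gamma_L)| = 1$ is finite regardless of $\mathbb{K}$. If instead $\mathbb{K}$ is finite with $q$ elements, then $L/Z(L)$ is itself a finite set, so $P(L/Z(L))$ --- indeed $\tfrac{q^m-1}{q-1}$ points --- is finite. This yields the ``if'' direction. For the converse I would exhibit infinitely many vertices when both alternatives fail: if $\mathbb{K}$ is infinite and $m \geq 2$, pick linearly independent $e_1, e_2 \in L/Z(L)$; the lines $\spann\{e_1 + t e_2\}$ for $t \in \mathbb{K}$ are pairwise distinct, so $V(\Gamma_L)$ is infinite. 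This step is routine projective-geometry counting.

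For (2), assume $\mathbb{K}$ is finite with $q = |\mathbb{K}|$ elements and set $m = \dim(L/Z(L))$, so by (1) the graph is finite. The crux is the degree of a vertex. Since $Z(L) \subseteq C_L(x)$ for every $x$, the quotient $C_L(x)/Z(L)$ is a genuine subspace of $L/Z(L)$; put $d_x = \dim(C_L(x)/Z(L)) = \dim C_L(x) - \dim Z(L)$. Using the adjacency reformulation recorded just before the proposition, namely $[x]\sim[y] \iff [y]\notin P(C_L(x)/Z(L))$, the vertices non-adjacent to $[x]$ are exactly those of $P(C_L(x)/Z(L))$ (which also contains $[x]$ itself, consistent with the absence of loops). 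Hence
\[
\deg([x]) = |P(L/Z(L))| - |P(C_L(x)/Z(L))| = \frac{q^m - 1}{q - 1} - \frac{q^{d_x} - 1}{q - 1} = \frac{q^m - q^{d_x}}{q - 1}.
\]

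Finally I would read off the equivalence from this formula. The map $d \mapsto \frac{q^m - q^{d}}{q - 1}$ is strictly decreasing in $d$ since $q \geq 2$, hence injective; therefore $\deg([x]) = \deg([y])$ forces $d_x = d_y$, and conversely equal $d_x$ clearly gives equal degrees. As $\dim Z(L)$ is a fixed constant, $d_x$ is constant over $x \in L \setminus Z(L)$ if and only if $\dim C_L(x)$ is, which is precisely the stated criterion. Note that $C_L(x)$, and hence $\deg([x])$ and $d_x$, depends only on the line $[x]$ and not on the chosen representative, so everything is well defined on vertices. I expect the only real obstacle to be bookkeeping: verifying $Z(L) \subseteq C_L(x)$ so the quotient makes sense, and ensuring $[x]$ is correctly counted among its own non-neighbors so no off-by-one error enters the degree formula. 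The monotonicity argument that upgrades ``equal degrees'' to ``equal centralizer dimensions'' is where finiteness of $\mathbb{K}$ is genuinely used.
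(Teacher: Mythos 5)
Your proposal is correct and takes essentially the same route as the paper: part (1) via the infinitude of the projective space $P(\mathbb{K}^n)$ when $\mathbb{K}$ is infinite and $n \geq 2$ (your family of lines $\spann\{e_1 + t e_2\}$ just makes that standard fact explicit), and part (2) via the complement count identifying non-neighbors of $[x]$ with $P(C_L(x)/Z(L))$, so that equal degrees force $|C_L(x)/Z(L)| = |C_L(y)/Z(L)|$ and hence equal centralizer dimensions. Your explicit formula $\deg([x]) = \frac{q^m - q^{d_x}}{q-1}$ is the same one the paper derives later in the proof of Theorem \ref{Hamilton}, so your write-up is merely a more detailed version of the paper's argument.
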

\begin{proof}
    To prove (1), 
    assume that $\dim(L / Z(L)) = n \geq 2$ and $\mathbb{K}$ is infinite. Note that $L / Z(L) \cong \mathbb{K}^n$ as a vector space. Moreover, it is well-known that the projective space $P(\mathbb{K}^n)$ is infinite. Thus, the graph $\Gamma_L$ is also infinite. The converse is clear. 

    Next, assume that $\mathbb{K}$ is finite. Then $V(\Gamma_L)$ is also finite. If $\Gamma_L$ is regular, then for any $[x], [y] \in P(L / Z(L))$, $\deg([x]) = \deg([y])$, that is,
    \[
    | \{[a] \in V(\Gamma_L) \mid [a, x] \neq 0 \} | = | \{[a] \in V(\Gamma_L) \mid [a, y] \neq 0 \} |.
    \]
    This implies $\Big | C_L(x) / Z(L) \Big | = \Big | C_L(y) / Z(L) \Big |$. Hence, $\dim (C_L(x)) = \dim (C_L(y))$. The converse can be shown similarly.
\end{proof}

\begin{prop}
$\Gamma_L$ is a complete graph if and only if for all $x \in L \setminus Z(L)$, $C_L(x) = \langle x \rangle \oplus Z(L)$.
\end{prop}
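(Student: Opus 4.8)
The plan is to characterize completeness via the adjacency criterion already recorded in the excerpt, namely that $[x] \sim [y]$ iff $[y] \notin P(C_L(x)/Z(L))$. The graph $\Gamma_L$ is complete precisely when every pair of \emph{distinct} vertices is adjacent, so I would translate this into the statement that for each $x \in L \setminus Z(L)$, the only vertex $[y]$ that fails to be adjacent to $[x]$ is $[x]$ itself. In other words, $\Gamma_L$ is complete if and only if for every $x \in L \setminus Z(L)$, the set of non-neighbors of $[x]$ consists of $[x]$ alone, i.e. $P(C_L(x)/Z(L)) = \{[x]\}$.

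First I would prove the forward direction. Assume $\Gamma_L$ is complete and fix $x \in L \setminus Z(L)$. Since $x \in C_L(x)$ and $Z(L) \subseteq C_L(x)$ always, we have $\langle x \rangle \oplus Z(L) \subseteq C_L(x)$, so only the reverse inclusion needs proof. Suppose for contradiction that there is some $y \in C_L(x)$ with $y \notin \langle x \rangle \oplus Z(L)$; then $[y]$ is a vertex of $\Gamma_L$ distinct from $[x]$, and since $y \in C_L(x)$ we have $[x, y] = 0$, so $[x] \not\sim [y]$, contradicting completeness. Hence $C_L(x) = \langle x \rangle \oplus Z(L)$.

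For the converse, assume $C_L(x) = \langle x \rangle \oplus Z(L)$ for every $x \in L \setminus Z(L)$, and take two distinct vertices $[x]$ and $[y]$. If they were non-adjacent, then $[x, y] = 0$, so $y \in C_L(x) = \langle x \rangle \oplus Z(L)$, which forces $y + Z(L) \in \langle x + Z(L)\rangle$ in $L/Z(L)$, i.e. $[y] = [x]$, contradicting distinctness. Therefore every pair of distinct vertices is adjacent and $\Gamma_L$ is complete.

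The only delicate point, and the one I would state carefully, is the identification in the last step: membership $y \in \langle x \rangle \oplus Z(L)$ must be shown to be equivalent to the projective equality $[y] = [x]$ in $P(L/Z(L))$. This uses that $x \notin Z(L)$ so that $x + Z(L)$ is a nonzero element of $L/Z(L)$, and that $\langle x \rangle \oplus Z(L)$ is exactly the preimage of the line $\spann\{x + Z(L)\}$ under the quotient map. I do not expect any real obstacle beyond bookkeeping with cosets; the proof is a direct unwinding of the adjacency definition combined with the standard correspondence between subspaces of $L$ containing $Z(L)$ and subspaces of $L/Z(L)$.
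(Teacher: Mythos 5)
Your proof is correct and follows essentially the same argument as the paper: both directions unwind the adjacency criterion, deriving $C_L(x) = \langle x \rangle \oplus Z(L)$ from completeness by contradiction, and conversely showing distinct vertices must be adjacent since a non-neighbor of $[x]$ would lie in $\langle x \rangle \oplus Z(L)$ and hence coincide with $[x]$. Your explicit attention to the coset bookkeeping (membership in $\langle x \rangle \oplus Z(L)$ versus projective equality $[y]=[x]$) is a point the paper leaves implicit, but it is the same proof.
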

\begin{proof}
Assume that $\Gamma_L$ is complete. Let $x \in L \setminus Z(L)$ and $y \in C_L(x)$. If $y \notin \langle x \rangle \oplus Z(L)$, then $[y] \sim [x]$, and so $[y, x] \neq 0$, a contradiction. Thus, $C_L(x) = \langle x \rangle \oplus Z(L)$.

 Conversely, suppose that for all $x \in L \setminus Z(L)$, $C_L(x) = \langle x \rangle \oplus Z(L)$. Let $[y], [z]$ be two distinct vertices in $V(\Gamma_L)$. Then $C_L(y) = \langle y \rangle \oplus Z(L)$, which implies that $z \notin C_L(y)$. Hence, $[y] \sim [z]$.
\end{proof}

\begin{thm}\label{diameter}
Assume that $V(\Gamma_L)$ contains at least two elements.
Then $\Gamma_L$ is connected and its diameter is $\diam (\Gamma_L) = 1$ or $2$. Moreover, the girth of $\Gamma_L$ equals $3$.
\end{thm}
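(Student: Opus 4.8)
The plan is to isolate a single key fact and then read off all three conclusions from it: \emph{any two distinct vertices $[x]$ and $[y]$ of $\Gamma_L$ admit a common neighbor}. Once this is established, connectedness and the diameter bound are immediate, and the girth statement follows by exhibiting a triangle. No finiteness hypothesis on $\mathbb{K}$ is needed.

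First I would prove the common-neighbor claim. Let $[x]$ and $[y]$ be distinct vertices, so $x, y \notin Z(L)$. Since $L$ is non-abelian and $x \notin Z(L)$, we have $C_L(x) \neq L$, so there is an element $a$ with $[x,a] \neq 0$; similarly there is $b$ with $[y,b]\neq 0$. Note that any element $c$ with $[x,c]\neq 0$ automatically lies outside $Z(L)$, so $[c]$ is a genuine vertex and $[c]\neq[x]$. The claim is that one of $a$, $b$, $a+b$ simultaneously fails to commute with both $x$ and $y$. Indeed, if $[y,a]\neq 0$ take $z=a$; if $[x,b]\neq 0$ take $z=b$; otherwise $[y,a]=0$ and $[x,b]=0$, and bilinearity of the bracket gives $[x,a+b]=[x,a]\neq 0$ and $[y,a+b]=[y,b]\neq 0$, so $z=a+b$ works. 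In every case $[z]$ is a common neighbor of $[x]$ and $[y]$, distinct from both since $z$ commutes with neither $x$ nor $y$.

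With this in hand, connectedness and the diameter bound follow at once: for distinct $[x],[y]$, either $[x]\sim[y]$, giving $\dist([x],[y])=1$, or else they have a common neighbor $[z]$ and $[x]\sim[z]\sim[y]$ gives $\dist([x],[y])=2$. Hence $\Gamma_L$ is connected with $\diam(\Gamma_L)\le 2$. Since $L$ is non-abelian there exist $u,v$ with $[u,v]\neq 0$, so $[u]\sim[v]$ is an edge and $\diam(\Gamma_L)\ge 1$; thus the diameter is $1$ or $2$. For the girth, I would apply the common-neighbor construction to the \emph{adjacent} pair $[u],[v]$ above — the argument nowhere used that the two vertices are non-adjacent — to obtain a vertex $[w]$ adjacent to both and distinct from each. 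Then $[u],[v],[w]$ are pairwise adjacent, forming a triangle, so $\Gamma_L$ contains a cycle of length $3$; since a simple graph has no shorter cycle, $g(\Gamma_L)=3$.

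The only subtle point, and the part I would treat as the crux, is the common-neighbor step, specifically the use of the element $a+b$ to merge two separate witnesses into a single one. The case distinction on whether $[x,b]$ and $[y,a]$ vanish is exactly what makes the merge succeed, and it is what lets the whole theorem go through over an arbitrary field with only the non-abelian hypothesis.
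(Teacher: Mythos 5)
Your proposal is correct and follows essentially the same route as the paper: the heart of both arguments is the bilinearity trick of merging the two witnesses into the single element $a+b$ (the paper's $x'+y'$), and your triangle for the girth, when instantiated with the natural witnesses $a=v$, $b=u$ for an adjacent pair $[u]\sim[v]$, is exactly the paper's vertex $[u+v]$. Isolating the common-neighbor statement as a lemma that covers adjacent and non-adjacent pairs alike is a tidy repackaging, but not a different proof.
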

\begin{proof}
    Since $L$ is non-abelian, there exist $a, b \in L \setminus Z(L)$ such that $[a, b] \neq 0$. Then $[a] \sim [b]$ and so $\diam (\Gamma_L) \geq 1$. Let $[x]$ and $[y]$ be two distinct non-adjacent vertices in $\Gamma_L$. Then $x, y \in L \setminus Z(L)$ and so
    there exist $x', y' \in L \setminus Z(L)$ such that $[x, x'] \neq 0$ and $[y, y'] \neq 0$, i.e., 
    $[x] \sim [x']$ and $[y] \sim [y']$. If $[x', y] \neq 0$ or $[x, y'] \neq 0$, then
    $[y] \sim [x']$ or $[x] \sim [y']$, and so $\dist ([x], [y]) = 2$. Now suppose otherwise, we have $[y, x'] = [x, y'] = 0$. So, $[x, x' + y'] = [x, x'] + [x, y'] = [x, x'] \neq 0$ and $[y, x' + y'] = [y, x'] + [y, y'] = [y, y'] \neq 0$. Thus, $[x' + y']$ is a vertex in $\Gamma_L$ which is adjacent to both $[x]$ and $[y]$, so we have $\dist ([x], [y]) = 2$. Hence, $\diam (\Gamma_L) = 1$ or $2$. From the same arguments, we see that $\Gamma_L$ is connected. Moreover, it is obvious that for any two adjacent vertices $[x]$ and $[y]$, we have $[x + y]$ is a vertex in $V(\Gamma_L)$ which is distinct from $[x]$ and $[y]$ and $[x] \sim [x + y] \sim [y] \sim [x]$ which is a triangle. So, the girth of $\Gamma_L$ is $3$.
\end{proof}

When the base field is finite, we have the following two theorems.

\begin{thm}\label{Hamilton}
    Let $L$ be a finite-dimensional non-abelian Lie algebra over a finite field $\mathbb{F}_q$. Then the graph $\Gamma_L$ is Hamiltonian. 
\end{thm}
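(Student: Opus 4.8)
The plan is to apply Dirac's theorem: a finite simple graph on $N \geq 3$ vertices whose minimum degree is at least $N/2$ is Hamiltonian. So I first need to pin down the order of $\Gamma_L$ and then bound its minimum degree from below. To begin, I would record that since $L$ is non-abelian, the quotient $L/Z(L)$ cannot be one-dimensional: if $L = \langle x \rangle + Z(L)$ for a single $x$, then every bracket vanishes and $L$ is abelian. Hence $n := \dim(L/Z(L)) \geq 2$. Over the finite field $\mathbb{F}_q$ the vertex set is the projective space $P(\mathbb{F}_q^n)$, so the order of $\Gamma_L$ is
\[
N = \frac{q^n - 1}{q-1} = 1 + q + \cdots + q^{n-1} \geq q + 1 \geq 3,
\]
and the size condition in Dirac's theorem is automatically met.

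Next I would estimate the minimum degree. For a vertex $[x]$, the observation recorded just before Proposition \ref{basic} shows that the vertices \emph{not} adjacent to $[x]$, together with $[x]$ itself, are precisely the points of $P(C_L(x)/Z(L))$. Writing $d_x := \dim(C_L(x)/Z(L))$ and noting that $x \notin Z(L)$ forces $C_L(x) \subsetneq L$, hence $d_x \leq n-1$, I obtain
\[
\deg([x]) = N - \frac{q^{d_x}-1}{q-1} = \frac{q^n - q^{d_x}}{q-1} \geq \frac{q^n - q^{n-1}}{q-1} = q^{n-1},
\]
so that $\delta(\Gamma_L) \geq q^{n-1}$ uniformly over all vertices.

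It then remains only to verify the degree condition $q^{n-1} \geq N/2$. Clearing denominators, this is equivalent to $q^n - 2q^{n-1} + 1 \geq 0$, i.e. $q^{n-1}(q-2) + 1 \geq 0$, which holds for every prime power $q \geq 2$. Therefore $\delta(\Gamma_L) \geq q^{n-1} \geq N/2$, and Dirac's theorem produces a Hamiltonian cycle in $\Gamma_L$.

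The substance of the argument is the degree count, and I do not anticipate a genuine obstacle once the minimum degree is identified as $q^{n-1}$ and compared with $N/2$; the finite-field cardinalities make the required inequality hold uniformly in both $q$ and $n$. The only mild subtlety is the preliminary reduction $n \geq 2$, which rests on the Lie-algebraic fact that a one-dimensional $L/Z(L)$ would force $L$ to be abelian, contradicting the hypothesis; this is exactly the step that guarantees $N \geq 3$ and hence the applicability of Dirac's theorem.
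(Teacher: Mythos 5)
Your proposal is correct and follows essentially the same route as the paper: both compute the degree of a vertex $[x]$ as $\deg([x]) = \frac{q^{n}-q^{d_x}}{q-1}$ via the projective space of $C_L(x)/Z(L)$, bound it below using $C_L(x) \subsetneq L$, and invoke Dirac's theorem. Your write-up is in fact slightly more careful than the paper's, since you explicitly verify the hypothesis $N \geq 3$ (via $\dim(L/Z(L)) \geq 2$) and spell out the inequality $q^{n-1} \geq N/2$, both of which the paper's proof leaves implicit.
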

\begin{proof}
    Let $[x] \in V(\Gamma_L)$. Then 
    \begin{align*}
        \deg ([x]) &= |P(L / Z(L)) \setminus P(C_L(x) / Z(L))| \\
        & = |P(L / Z(L))| - |P(C_L(x) / Z(L))|.
    \end{align*}
    Suppose that $\dim (L) = n$, $\dim (C_L(x)) = r$ and $\dim (Z(L)) = s$. Then $s < r < n$ because $x \in L \setminus Z(L)$. Moreover, $|V(\Gamma_L)| = \dfrac{q^{n - s} - 1}{q - 1}$ and $\deg ([x]) = \dfrac{q^{n - s} - q^{r - s}}{q - 1}$. This implies that 
    $\deg(x) >  \dfrac{|V(\Gamma_L)|}{2}$ since $q \geq 2$.
    Hence, by Dirac's theorem \cite{Dirac52}, $\Gamma_L$ is Hamiltonian.
\end{proof}

\begin{thm}\label{Eulerian}
    Let $L$ be a finite-dimensional non-abelian Lie algebra over a finite field $\mathbb{F}_q$. If either of the following statements hold:
    \begin{enumerate}
        \item $q = 2$, or
        \item $q > 2$ and for all vertex $[x] \in V(\Gamma_L)$, $C_L(x)$ has even codimension, 
    \end{enumerate}
    then $\Gamma_L$ is Eulerian.
\end{thm}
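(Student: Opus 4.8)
The plan is to invoke Euler's classical characterization: a connected graph (with every vertex of positive degree) has a closed trail through all its edges if and only if every vertex has even degree. Since $L$ is non-abelian we have $\dim(L/Z(L)) \geq 2$, so $V(\Gamma_L)$ has at least two elements and $\Gamma_L$ is connected by Theorem \ref{diameter}; moreover each degree is positive. Thus it suffices to show that $\deg([x])$ is even for every vertex $[x]$. The degree formula is already available from the proof of Theorem \ref{Hamilton}: writing $n = \dim(L)$, $s = \dim(Z(L))$ and $r = \dim(C_L(x))$ (so that $s < r < n$), we have $\deg([x]) = \frac{q^{n-s} - q^{r-s}}{q-1}$, which I would factor as
\[
\deg([x]) = q^{r-s}\,\frac{q^{n-r}-1}{q-1} = q^{r-s}\bigl(1 + q + \cdots + q^{n-r-1}\bigr).
\]
The whole argument then reduces to a parity check on this product, organized by the parity of $q$.

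For case (1), where $q = 2$, the factor $q^{r-s} = 2^{r-s}$ is even because $r - s \geq 1$ (as $x \notin Z(L)$ forces $C_L(x) \supsetneq Z(L)$), so $\deg([x])$ is even immediately, irrespective of the codimension. For case (2) I would separate two subcases. If $q$ is an even prime power greater than $2$, then $q^{r-s}$ is again divisible by $q$ and hence even, so the degree is even without invoking the codimension hypothesis at all. The substantive subcase is $q$ odd: here $q^{r-s}$ is odd, so the parity of $\deg([x])$ coincides with the parity of the geometric sum $1 + q + \cdots + q^{n-r-1}$, which is a sum of $n - r$ odd terms. Its parity therefore equals the parity of $n - r$, the codimension of $C_L(x)$, and the hypothesis that this codimension is even makes the sum even. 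In every case $\deg([x])$ is even.

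Combining the cases, all vertices of $\Gamma_L$ have even (and positive) degree, and since $\Gamma_L$ is connected, Euler's theorem yields that $\Gamma_L$ is Eulerian. I expect no serious obstacle; the argument is essentially a parity computation carried out on the degree formula inherited from Theorem \ref{Hamilton}. The only point requiring care is that the even-codimension hypothesis in (2) is genuinely needed only when $q$ is odd---when $q$ is an even prime power exceeding $2$ the conclusion holds automatically---so the statement as given provides a sufficient, not necessary, condition in that regime. Making the two subcases of (2) explicit keeps the parity bookkeeping transparent.
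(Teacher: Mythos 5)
Your proof is correct and takes essentially the same approach as the paper: both rely on the degree formula $\deg([x]) = q^{r-s}\bigl(1 + q + \cdots + q^{n-r-1}\bigr)$ inherited from Theorem \ref{Hamilton} and conclude via Euler's theorem. Your version simply makes explicit the parity bookkeeping (and the connectivity check via Theorem \ref{diameter}) that the paper compresses into ``it can be seen,'' including the accurate side remark that for even prime powers $q > 2$ the even-codimension hypothesis is not actually needed.
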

\begin{proof}
    As in the proof of Theorem \ref{Hamilton}, for all vertex $[x] \in V(\Gamma_L)$, we have $\deg ([x]) = q^{r - s}(1 + q + \cdots + q^{n - r - 1})$. Under the assumptions, it can be seen that every vertex of $\Gamma_L$ has an even degree. By Euler's theorem, this graph is Eulerian.
\end{proof}

\begin{thm}\label{planar}
    Let $L$ be a finite-dimensional non-abelian Lie algebra over a finite field $\mathbb{F}_q$. Then $\Gamma_L$ is planar if and only if one of the following conditions holds:
    \begin{enumerate}
        \item $L$ is a $2$-step nilpotent Lie algebra over $\mathbb{F}_q$ where $q = 2$ or $3$ with abelian $2$-dimensional $L/Z(L)$,
        \item $L$ is a non-abelian Lie algebra over $\mathbb{F}_q$ where $q = 2$ or $3$ with non-abelian 2-dimensional $L/Z(L)$.
    \end{enumerate}
\end{thm}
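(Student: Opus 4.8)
The plan is to reduce everything to the single integer $d:=\dim(L/Z(L))$ together with $q$, and to split the argument into the regimes $d=2$ and $d\ge 3$. First I would record two elementary facts. Since $L$ is non-abelian one has $d\ge 2$ (if $d\le 1$ the only possible basis bracket $[e_1,e_1]$ vanishes and $L$ is abelian). And, exactly as in the proof of Theorem \ref{Hamilton}, every vertex satisfies
\[
\deg([x]) = \frac{q^{d}-q^{\,c(x)}}{q-1}, \qquad c(x):=\dim\bigl(C_L(x)/Z(L)\bigr)\in\{1,\dots,d-1\},
\]
so that $\deg([x])\ge q^{\,d-1}$ for every $x$, while $|V(\Gamma_L)|=(q^{d}-1)/(q-1)$. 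I would also use that the bracket descends to a well-defined alternating bilinear map $\beta$ on $L/Z(L)$ with values in $L$ (central shifts and nonzero rescalings do not change whether a bracket vanishes), whose image is $[L,L]$ and for which $\rank(\ad_x)=\dim\im\beta(\bar x,-)$, where $\bar x$ denotes the image of $x$ in $L/Z(L)$.

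For the direction $(\Leftarrow)$ and for the $d=2$ half of $(\Rightarrow)$, I would first show that $d=2$ forces $\Gamma_L\cong K_{q+1}$. Choosing a basis $\bar e_1,\bar e_2$ of $L/Z(L)$, any two distinct projective points are spanned by independent vectors, and $\beta(\bar u,\bar v)$ equals a nonzero determinant times $[e_1,e_2]$, with $[e_1,e_2]\ne 0$ since $L$ is non-abelian; hence the two points are adjacent. Thus $\Gamma_L$ is complete on $q+1$ vertices and is planar iff $q+1\le 4$, i.e. $q\in\{2,3\}$. Since a $2$-dimensional $L/Z(L)$ is either abelian (case (1), $L$ being $2$-step nilpotent) or the unique non-abelian one (case (2)), conditions (1)--(2) describe exactly ``$d=2$ and $q\in\{2,3\}$'', settling both implications in this regime.

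It then remains to prove that $d\ge 3$ always yields a non-planar graph. When $(q,d)\ne(2,3)$ this is cheap: the minimum degree is at least $q^{\,d-1}$, and $q^{\,d-1}\ge 6$ in every such case (the only value below $6$ is $q^{\,d-1}=4$, attained precisely at $(q,d)=(2,3)$), whereas every planar graph has a vertex of degree at most $5$; hence $\Gamma_L$ is not planar.

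The delicate case, which I expect to be the main obstacle, is $(q,d)=(2,3)$. Here $|V(\Gamma_L)|=7$ and the degrees lie in $\{4,6\}$, so $|E|=14+k$ with $k$ the number of degree-$6$ vertices, and planarity would force $|E|\le 3\cdot 7-6=15$, i.e. $k\le 1$. I would rule this out by a rank argument. If $k\le 1$, then at least six of the seven points satisfy $\rank(\ad_x)=1$, and six points cannot all lie in a single $2$-dimensional subspace (whose projectivization has only $3$ points), so among them sits a basis $\bar e_1,\bar e_2,\bar e_3$ of $L/Z(L)$ with each $\ad_{e_i}$ of rank $\le 1$. This forces $[e_1,e_2],[e_1,e_3],[e_2,e_3]$ to be pairwise linearly dependent, hence to span a space of dimension $\le 1$; thus $[L,L]=\langle w\rangle$ and $[x,y]=\omega(\bar x,\bar y)\,w$ for an alternating $\mathbb{F}_2$-bilinear form $\omega$ on the $3$-dimensional space $L/Z(L)$. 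Being an alternating form in odd dimension, $\omega$ is degenerate, so some $0\ne \bar u$ lies in its radical and satisfies $\ad_u=0$, i.e. $u\in Z(L)$ --- contradicting $\bar u\ne 0$. Therefore $k\ge 2$, so $|E|\ge 16>15$, and $\Gamma_L$ is non-planar, completing the proof.
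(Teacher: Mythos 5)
Your proof is correct, and while it starts from the same two ingredients as the paper (the degree formula $\deg([x])=(q^{d}-q^{c(x)})/(q-1)$ and the basic planarity obstructions), it diverges genuinely in the two places where real work is needed. For $\dim(L/Z(L))=2$ the paper first uses the minimum-degree bound to force $q\le 5$, then invokes the classification of two-dimensional Lie algebras and lists projective points explicitly to rule out $q=4,5$ via $K_5$ and $K_6$; you instead note once and for all that the induced bracket on $L/Z(L)$ makes any two distinct projective points adjacent (a nonzero determinant times $[e_1,e_2]\ne 0$), so $\Gamma_L\cong K_{q+1}$ and planarity is exactly $q\in\{2,3\}$ --- this settles both implications for $d=2$ over every $q$ at once, with no prior bound on $q$ and no case split between the abelian and non-abelian quotient. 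The more substantial difference is the case $(q,d)=(2,3)$: the paper enumerates possible values of $a_1=[x,y]$, $a_2=[x,z]$, $a_3=[y,z]$, computes $18$ edges in one ``minimal'' configuration, and then asserts that the remaining configurations also have at least $18$ edges without checking them; your argument replaces this enumeration by a structural one, namely that planarity together with the degree dichotomy $\{4,6\}$ would force at most one vertex of rank $2$, that six rank-one points must contain a basis of $L/Z(L)$, that rank-one adjoints on a basis force $\dim[L,L]\le 1$, and hence that the bracket is an alternating form on an odd-dimensional $\mathbb{F}_2$-space --- necessarily degenerate, producing a nonzero class in $L/Z(L)$ whose representatives are central, a contradiction. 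Your edge bound ($|E|\ge 16$) is weaker than the paper's claimed $18$, but it is proved uniformly for all bracket configurations, so your treatment of this delicate case is both cleaner and more complete than the paper's sketch; the rest of the logic (matching conditions (1)--(2) with ``$d=2$ and $q\in\{2,3\}$'', and killing all $(q,d)$ with $d\ge 3$, $(q,d)\neq(2,3)$ by $q^{d-1}\ge 6$) is sound.
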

\begin{proof}
    Assume that $L$ is a non-abelian Lie algbra over $\mathbb{F}_q$ of dimension $n > 1$. Suppose that $\Gamma_L$ is planar. By \cite[Theorem 10.1.5]{CM09}, there exists $[x] \in V(\Gamma_L)$ such that $\deg ([x]) \leq 5$. Since 
    $$
    \deg ([x]) = q^{r - s}(1 + q + \cdots + q^{n - r - 1}),
    $$
    where $n = \dim(L), r = \dim (C_L(x)), s = \dim (Z(L))$ with $s < r < n$, we have $q \leq 5$. Moreover, $\dim (L / Z(L)) = n - s = 2$ or $3$ when $q = 2$
    and $\dim (L / Z(L)) = n - s = 2$ when $q = 3, 4$ or $5$.
    First, we consider when $\dim (L/Z(L)) = 2$. 
    By \cite[Theorem 3.1]{EW06}, there are two cases:
    \begin{enumerate}
        \item $L/Z(L)$ is abelian with a basis $\{x + Z(L), y + Z(L)\}$ and $ 0 \neq [x, y] \in Z(L)$ (because $L$ is non-abelian), 
        \item $L/Z(L)$ has a basis $\{x + Z(L), y + Z(L)\}$ with the condition $[x + Z(L), y + Z(L)] = x + Z(L)$.
    \end{enumerate}
    For both cases, if $q = 4$ and $\mathbb{F}_4 = \{0, 1, \alpha, \alpha + 1\}$, where $\alpha^2 = \alpha + 1$, then $$
    P(L/Z(L)) = \{[x], [y], [x + y], [\alpha x + y], [(\alpha + 1)x + y] \}.
    $$
    Thus, $\Gamma_L$ would be the complete graph $K_5$, which is non-planar. Similarly, if $q = 5$, $\Gamma_L$ would be the complete graph $K_6$, which is also non-planar. Note that case (1) implies that $L$ is a $2$-step nilpotent Lie algebra over $\mathbb{F}_q$ where $q = 2$ or $3$ with abelian $2$-dimensional $L/Z(L)$. On the other hand, case (2) gives $L$ a non-abelian Lie algebra over $\mathbb{F}_q$ where $q = 2$ or $3$ with non-abelian $2$-dimensional $L/Z(L)$.
        Now, assume that $\dim (L/Z(L)) = 3$ and $q = 2$. Suppose that $L/Z(L)$ has a basis $\{x + Z(L), y + Z(L), z + Z(L)\}$ with the bracket relations in $L$: $[x, y] = a_1, [x, z] = a_2$ and $[y, z] = a_3$ for some $a_1, a_2, a_3 \in L$. Then the vertex set $V(\Gamma_L)$ is
    \[
        P(L/Z(L)) = \{[x], [y], [z], [x + y], [x + z], [y + z], [x + y + z] \}.
    \]
    It is worth noting that $a_i$'s cannot all be the same; otherwise $x + y + z$ would be a central element in $L$. Suppose that two of $a_i$'s are zero. Without loss of generality, we may assume $a_2 = a_3 = 0$. Then $z$ would be a central element in $L$, a contradiction. Similarly, if one of $a_i$'s is zero, then the rest must be different. Consider a minimal case $a_1 \neq a_2 \in L \setminus \{0\}$ and $a_3 = 0$. The graph $\Gamma_L$ has $7$ vertices and $18$ edges. It is non-planar as a consequence of Euler's formula (see \cite[Theorem 10.1.2]{CM09}). For other cases, there are at least $18$ edges, so again they cannot be planar. Hence, it is impossible that $\dim(L/Z(L)) = 3$ and $q = 2$.

     Conversely, suppose that $q = 2$ and $L/Z(L)$ has a basis $\{x + Z(L), y + Z(L)\}$ with $[x, y]  \neq 0$. Then $P(L/Z(L)) = \{[x], [y], [x + y]\}$ and $\Gamma_L$ is a triangle, so it is planar. Finally, if $q = 3$ and $L/Z(L)$ has a basis $\{x + Z(L), y + Z(L)\}$ with $[x, y]  \neq 0$, then $P(L/Z(L)) = \{[x], [y], [x + y], [x + 2y]\}$ and $\Gamma_L$ is the complete graph $K_4$ which is planar.
\end{proof}

\begin{thm}
    If $L$ is a finite-dimensional non-abelian Lie algebra over a finite field, then $\kappa(\Gamma_L) > 1$.
\end{thm}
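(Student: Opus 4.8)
The plan is to obtain the bound $\kappa(\Gamma_L)\ge 2$ as an immediate consequence of the Hamiltonicity established in Theorem~\ref{Hamilton}, together with the standard fact that a Hamiltonian graph on at least three vertices has no cut vertex. Thus the whole argument reduces to (i) checking that $\Gamma_L$ has at least three vertices, and (ii) invoking the spanning cycle to rule out a cut vertex.

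For (i), I would first note that a non-abelian $L$ forces $\dim(L/Z(L))\ge 2$: if $\dim(L/Z(L))=1$ then $L=\langle x\rangle+Z(L)$ for some $x$, and every bracket $[\alpha x+s,\beta x+t]$ with $s,t\in Z(L)$ vanishes, making $L$ abelian, a contradiction. Writing $n=\dim(L)$ and $s=\dim(Z(L))$, the computation in the proof of Theorem~\ref{Hamilton} gives
\[
|V(\Gamma_L)|=\frac{q^{\,n-s}-1}{q-1}=1+q+\cdots+q^{\,n-s-1},
\]
and since $n-s\ge 2$ and $q\ge 2$ this is at least $1+q\ge 3$.

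For (ii), Theorem~\ref{Hamilton} provides a spanning (Hamiltonian) cycle $C$ of $\Gamma_L$. For any vertex $[v]$, deleting $[v]$ from $C$ leaves a Hamiltonian path on the remaining $|V(\Gamma_L)|-1\ge 2$ vertices, which is in particular connected; hence $\Gamma_L-[v]$ is connected for every single vertex $[v]$. Therefore $\Gamma_L$ has no cut vertex, and so $\kappa(\Gamma_L)\ge 2>1$.

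The argument has essentially no serious obstacle once Hamiltonicity is in hand; the only points needing care are the elementary vertex count in (i) and the observation that $\Gamma_L$ is neither a single vertex nor a single edge, both handled by $|V(\Gamma_L)|\ge 3$. If one preferred to avoid quoting Hamiltonicity, an alternative route is a direct contradiction: a cut vertex $[v]$ separating non-adjacent $[x],[y]$ would, since $\diam(\Gamma_L)\le 2$ by Theorem~\ref{diameter}, have to be their unique common neighbour, and one could then reproduce the construction of a second common neighbour (via an element of the form $[x'+y']$) from the proof of Theorem~\ref{diameter} to force a contradiction. The Hamiltonian route is, however, shorter and cleaner.
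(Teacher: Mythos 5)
Your proof is correct, and it takes a genuinely different route from the paper's. You derive $\kappa(\Gamma_L)\ge 2$ from Hamiltonicity: Theorem~\ref{Hamilton} appears before this statement, so there is no circularity, and your two supporting checks are both sound --- the vertex count $|V(\Gamma_L)|=1+q+\cdots+q^{\,n-s-1}\ge 3$ (using that a non-abelian $L$ forces $\dim(L/Z(L))\ge 2$) and the standard fact that deleting one vertex from a spanning cycle leaves a spanning path, so a Hamiltonian graph on at least three vertices has no cut vertex. The paper instead gives a direct, self-contained contradiction: assuming a cut set $\{[x]\}$, it picks neighbours $[u]$, $[v]$ of $[x]$ in distinct components of $\Gamma_L\setminus\{[x]\}$, notes $[u,v]=0$, and observes that $[u+x]$ is a vertex outside the cut set adjacent to both $[u]$ and $[v]$ (since $[u+x,u]=[x,u]\neq 0$ and $[u+x,v]=[x,v]\neq 0$), contradicting that $[u]$ and $[v]$ lie in different components. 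The trade-off is this: your argument is shorter once Hamiltonicity is in hand, but it inherits the dependence on the field being finite (through Dirac's theorem), whereas the paper's bilinearity trick --- the same device used in Theorem~\ref{diameter} --- makes no use of finiteness and so isolates the real reason the result holds. A side benefit of your write-up is the explicit verification that $|V(\Gamma_L)|\ge 3$, which the paper leaves implicit even though it is also needed to legitimately invoke Dirac's theorem in the proof of Theorem~\ref{Hamilton}. Your sketched alternative at the end (a second common neighbour of the form $[x'+y']$) is essentially the paper's argument, so you identified both routes.
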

\begin{proof}
    Let $S$ be a cut set such that $|S| = \kappa(\Gamma_L)$. Suppose that $\kappa(\Gamma_L)  = 1$. Then $S = \{[x]\}$ for some $[x] \in V(\Gamma_L)$. Let $[u]$ and $[v]$ be two elements that belong to distinct connected components of $\Gamma_L \setminus S$ and $[u] \sim [x] \sim [v]$ in $\Gamma_L$. Then $[u, v] = 0$, and $[u + x]$ is a vertex in $V(\Gamma_L)$ distinct from $[x], [u]$ and $[v]$. In addition, we have $[u + x]  \sim [u]$ and $[u + x] \sim [v]$. This is a contradiction because $[u + x] \notin S$. Hence $\kappa(\Gamma_L) > 1$.
\end{proof}

For any subset $M \subseteq V(\Gamma_L)$, a set of representative elements for $M$ is defined as a set $\mathcal{P}(M)$ that contains exactly one element $x \in L$ from each $[x] \in P(L/Z(L))$. Moreover, we let $U(M)$ denote the union of one dimensional subspaces spanned by each $x \in \mathcal{P}(M)$, that is, 
$$
U(M) = \bigcup\limits_{x_\alpha \in \mathcal{P}(M)} \spann \{x_\alpha\}.
$$
We have the following proposition. 

\begin{prop}
    Let $M$ be a maximal independent set of $\Gamma_L$.
    Then $U(M) \cup Z(L)$ is a maximal abelian subalgebra of $L$.
\end{prop}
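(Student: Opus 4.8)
The plan is to introduce the subspace $A := \spann(\mathcal{P}(M)) + Z(L)$ of $L$ generated by $U(M) \cup Z(L)$, show that $A$ is a maximal abelian subalgebra, and only then verify that $U(M) \cup Z(L)$ fills out all of $A$. Throughout I would use the defining feature of the graph, that two projective points are non-adjacent exactly when their representatives commute in $L$; thus the hypothesis that $M$ is independent says precisely that the chosen representatives commute pairwise.

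First I would establish that $A$ is an abelian subalgebra. For $x_\alpha, x_\beta \in \mathcal{P}(M)$ with $[x_\alpha] \neq [x_\beta]$, independence of $M$ makes $[x_\alpha]$ and $[x_\beta]$ non-adjacent, so $[x_\alpha, x_\beta] = 0$ in $L$ (and the bracket is trivially zero when the points coincide); every element of $Z(L)$ commutes with all of $L$. Since a chosen spanning set of $A$ commutes pairwise, bilinearity of the bracket gives $[A, A] = 0$. Hence $A$ is a finite-dimensional subspace closed under the bracket, i.e.\ an abelian subalgebra containing $Z(L)$.

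Next I would prove maximality by the standard correspondence argument. Suppose $A \subseteq A'$ for an abelian subalgebra $A'$. Then $Z(L) \subseteq A'$, so $A'/Z(L)$ is a subspace of $L/Z(L)$, and since $A'$ is abelian every pair of points of $P(A'/Z(L))$ is non-adjacent, making $P(A'/Z(L))$ an independent set. As each $[x_\alpha] \in M$ lies in $P(A/Z(L)) \subseteq P(A'/Z(L))$, we get $M \subseteq P(A'/Z(L))$, and maximality of $M$ forces $M = P(A'/Z(L))$; combined with $M \subseteq P(A/Z(L)) \subseteq P(A'/Z(L))$ this yields $P(A/Z(L)) = P(A'/Z(L))$. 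Two subspaces with the same projective points are equal, so $A/Z(L) = A'/Z(L)$, and since both contain $Z(L)$ we conclude $A = A'$. Thus $A$ is maximal abelian.

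The hard part will be reconciling the honest subspace $A$ with the set-theoretic object $U(M) \cup Z(L)$ appearing in the statement, since the raw union of the representative lines with the center need not be closed under addition (for instance, in the Heisenberg algebra a single representative line together with $Z(L)$ is a union of two lines, not a plane). The resolution is exactly the equality $M = P(A/Z(L))$ forced by maximality: every line of $A/Z(L)$ is then the class of some representative $x_\alpha$, so any $a \in A$ either lies in $Z(L)$ or satisfies $a = c\,x_\alpha + z$ with $z \in Z(L)$, which is the assertion $A = U(M) + Z(L)$. Passing from the union to the subspace it generates therefore recovers precisely the maximal abelian subalgebra $A$, and I expect this bookkeeping, rather than the two structural steps above, to be the genuine obstacle.
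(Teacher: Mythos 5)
Your proof is correct, and its cautious structure is justified. The paper's own proof works with the set $U(M)\cup Z(L)$ directly: it first claims closure under addition (for $x,y\in U(M)\cup Z(L)$, the sum $x+y$ commutes with every element of $U(M)$, so maximality of $M$ puts $[x+y]$ in $M$, whence, the paper asserts, $x+y\in U(M)\cup Z(L)$), and then proves maximality of the subalgebra exactly as you do in spirit: any $a\in T\setminus Z(L)$ for a larger abelian $T$ commutes with $U(M)$, so $[a]\in M$ and $a=\lambda x_\alpha+z$ with $z\in Z(L)$. Your route is different in two ways: you span first, so that abelianness of $A=\spann(\mathcal{P}(M))+Z(L)$ is immediate from bilinearity, and you obtain maximality through the projective correspondence ($P(A/Z(L))=P(A'/Z(L))$ forces $A=A'$), only afterwards reconciling $A$ with $U(M)$.

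The ``bookkeeping'' you flagged is not mere bookkeeping; it is precisely where the paper's argument has a gap. Maximality of $M$ only gives $x+y\in\spann\{x_\alpha\}\oplus Z(L)$ for some representative $x_\alpha$; it does \emph{not} give $x+y\in\spann\{x_\alpha\}\cup Z(L)$, which is what membership in $U(M)\cup Z(L)$ requires. Your Heisenberg example makes this concrete: there the graph is complete, so $M=\{[x]\}$ is a maximal independent set, and $U(M)\cup Z(L)=\spann\{x\}\cup\spann\{z\}$ is not closed under addition ($x+z$ lies in neither line), hence is not a subalgebra at all, while the correct object is $U(M)+Z(L)=\spann\{x,z\}=C_L(x)$, exactly your $A$. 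So the proposition as literally stated fails whenever $Z(L)\neq 0$; it is true under the reading you adopt, replacing the union by the sum $U(M)+Z(L)=\bigcup_{x_\alpha\in\mathcal{P}(M)}\bigl(\spann\{x_\alpha\}\oplus Z(L)\bigr)$, which is the form the paper itself uses in Theorem \ref{domi}. Your proof establishes that corrected statement cleanly, and your maximality argument via $M=P(A/Z(L))$ is exactly what certifies $A=U(M)+Z(L)$.
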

\begin{proof}
    Suppose that $M$ is a maximal independent set of $\Gamma_L$.
    Let $x, y \in U(M) \cup Z(L)$. Since $x + y$ commutes with every element of $U(M)$ and $M$ is a maximal independent set, $x + y \in U(M) \cup Z(L)$. Moreover, it is clear that $[x, y] = 0$. Thus $U(M) \cup Z(L)$ is an abelian subalgebra of $L$. 

    Next, we show that $U(M) \cup Z(L)$ is maximal. Let $T$ be an abelian subalgebra of $L$ containing $U(M) \cup Z(L)$. Let $a \in T \setminus Z(L)$. Note that $[a, s] = 0$ for all $s \in U(M)$. Thus $M \cup \{ [a]\}$ is an independent set. By the maximality of $M$, $[a] \in M$, that is, $[a] = [x_\alpha]$ for some $x_\alpha \in \mathcal{P}(M)$. Then $a - \lambda x_\alpha \in Z(L)$ for some $\lambda \neq 0$ and so, $a = \lambda x_\alpha + z$ for some $z \in Z(L)$. By the above paragraph, $a \in U(M) \cup Z(L)$.
    Hence, $T = U(M) \cup Z(L)$. This completes the proof.
\end{proof}


The following theorem presents a property of a nilpotent Lie algebra over a finite field when its non-commuting graph is regular.

\begin{thm}
    Let $L$ be a finite-dimensional non-abelian nilpotent Lie algebra over a finite field. If $\Gamma_L$ is regular, then the nilpotence class of $L$ is at most $3$.
\end{thm}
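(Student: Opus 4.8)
The plan is to translate regularity into a statement about breadths and then contradict a long lower central series. By Proposition~\ref{basic}(2), since the field is finite, $\Gamma_L$ is regular if and only if $\dim C_L(x)$ is the same for every $x\in L\setminus Z(L)$. As $C_L(x)=\ker(\ad_x)$ while $[x,L]=\im(\ad_x)$, rank--nullity recasts this as: the \emph{breadth} $b(x):=\dim[x,L]=\dim L-\dim C_L(x)$ takes a single value $k$ on all noncentral $x$. Writing $L_1=L$ and $L_{i+1}=[L,L_i]$ for the lower central series, the nilpotence class is at most $3$ precisely when $L_4=[L,[L,[L,L]]]=0$, equivalently $L_3\subseteq Z(L)$, equivalently $L_2\subseteq Z_2(L)$. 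So I would argue by contradiction, assuming the class is $c\ge 4$, and reduce everything to a structural fact with no regularity built in:

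\emph{Structural Lemma (target).} Every finite-dimensional nilpotent Lie algebra of class $c\ge 4$ contains two noncentral elements of different breadth. Granting this, the Lemma contradicts the constancy of $b$ forced by regularity, finishing the proof. To attack the Lemma I would produce a low-breadth witness and a high-breadth witness. For the low-breadth witness, note $[L_{c-1},L]=L_c\neq 0$, so $L_{c-1}$ is not central and there is a noncentral $u\in L_{c-1}$ with $\im(\ad_u)=[u,L]\subseteq L_c$; thus $b(u)\le\dim L_c$ and, crucially, the image of $u$ is confined to the single bottom layer $L_c$ of the series. For the high-breadth witness, the hypothesis $c\ge 4$ gives $L_2\not\subseteq Z_2(L)$, hence an element $v\in L_2\setminus Z_2(L)$ (or a suitable generator) whose image $[v,L]$ already meets $L_3$ strictly above $L_c$ and can spread across two consecutive lower-central layers. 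I would compare $u$ and $v$ through the induced maps $\overline{\ad_x}\colon L_i/L_{i+1}\to L_{i+1}/L_{i+2}$ on $\mathrm{gr}(L)=\bigoplus_i L_i/L_{i+1}$, using the filtration estimate $b(x)\ge\sum_i\rank\!\big(\overline{\ad_x}|_{L_i/L_{i+1}}\big)$, and try to force $b(v)>b(u)$.

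The main obstacle is exactly this strict inequality, and the reason it is delicate is that there is \emph{no uniform numerical threshold} to exploit. In particular, one cannot hope to prove ``$b(v)>\dim L_c$'': for free nilpotent algebras of class $4$ on many generators the maximal breadth is of order $\dim L_2$, which is far smaller than $\dim L_c=\dim L_4$, yet these algebras are still not regular. Hence the inequality $b(v)\neq b(u)$ must be extracted by a direct comparison of the two chosen elements rather than by bounding each against a global invariant. The decisive step will be a selection argument: since $L_2/L_3$ and $L_3/L_4$ are each spanned by classes of commutators $[\,\cdot\,,\,\cdot\,]$, a rank/counting argument over these spanning maps should yield an element $v$ whose layered rank $\sum_i\rank(\overline{\ad_v}|_{L_i/L_{i+1}})$ strictly exceeds the breadth $b(u)$, which is trapped in the bottom layer. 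I expect the genuine difficulty to be that a single element need not propagate through every layer of the series (unlike the generator of a filiform algebra, which realizes the entire descent at once), so choosing $v$ to guarantee strictly more independent image directions than $u$ is the crux. A useful guiding sanity check is robustness under abelian direct summands, which only enlarge $L/L_2$ and $Z(L)$ without touching deeper structure; passing to $\mathrm{gr}(L)$ neutralizes them, since such summands become degree-one central generators that drop out of every induced map $\overline{\ad_x}$ landing in $L_{i+1}/L_{i+2}$ with $i\ge 1$. Pinning down the selection of $v$ so that the layered ranks provably differ is the part I anticipate will demand the most care.
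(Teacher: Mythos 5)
Your opening reduction is exactly the paper's: by Proposition~\ref{basic}(2), regularity of $\Gamma_L$ forces $\dim C_L(x)$ to be constant on $L\setminus Z(L)$, i.e.\ $L$ has only two centralizer dimensions (equivalently, constant breadth on noncentral elements). But from that point on there is a genuine gap. The paper finishes in one line by invoking a known theorem of Barnea and Isaacs \cite[Theorem B]{BI03}: a nilpotent Lie algebra with at most two centralizer dimensions has nilpotence class at most $3$. Your ``Structural Lemma'' (every nilpotent Lie algebra of class $c\ge 4$ contains two noncentral elements of different breadth) is precisely the contrapositive of that theorem, so you have not simplified the problem --- you have restated it --- and you then do not prove it. You say explicitly that the decisive step, forcing $b(v)>b(u)$ for your two chosen witnesses, is ``the crux'' and is left open; your own discussion of free nilpotent algebras of class $4$ shows why the natural comparisons (bounding $b(v)$ against $\dim L_c$, or against any global invariant) fail, and the layered-rank selection argument you gesture at is never carried out.

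This is not a small missing verification: the Barnea--Isaacs result is the entire substance of the theorem (it occupies a research paper of its own), and constant breadth interacts with the lower central series in subtle ways that a filtration/rank estimate of the form $b(x)\ge\sum_i\rank\bigl(\overline{\ad_x}|_{L_i/L_{i+1}}\bigr)$ does not by itself control --- an element of $L_{c-1}$ can have very small breadth while no single element need realize large layered rank, which is exactly the obstruction you identify but do not overcome. As written, the proposal is a correct reduction plus an unproven conjectural lemma. To repair it, either supply a complete proof of the lemma (essentially reproving \cite[Theorem B]{BI03}) or cite that theorem directly, as the paper does.
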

\begin{proof}
    Suppose that $L$ is nilpotent and $\Gamma_L$ is regular. By Proposition \ref{basic} (2), we have  $|C_L(x)| = |C_L(y)|$ for any two $x, y \in L \setminus Z(L)$. This implies that $L$ has only two centralizer dimensions. Therefore, $L$ has the nilpotence class at most $3$, by \cite[Theorem B]{BI03}.
\end{proof}

\begin{remark}
    The classification of Lie algebras of nilpotence class $3$ is given in \cite{KNS23}.
\end{remark}

The dual question of Erd\"os,as posed by A. Abdollahi et al. \cite{AAM06} in the context of the non-commuting graph of a group is as follows. 

\begin{quest}\label{qind}
    Let $G$ be a group whose non-commuting graph has no infinite independent sets. Is it the independence number of $G$ is finite?
\end{quest}

Some cases of this question have been considered and positively answered in \cite{AAM06}.
The analogous version of Question \ref{qind} for a Lie algebra $L$ can also be considered, and we respond to it without assuming any additional conditions on $L$.

\begin{thm}
    Suppose that $\Gamma_L$ has no infinite independent sets. Then  $\alpha(\Gamma_L) < \infty$. 
\end{thm}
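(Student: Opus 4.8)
The plan is to reduce the statement to the elementary observation that every independent set of $\Gamma_L$ lives inside the projectivization of an abelian subalgebra, and that such subalgebras are automatically finite-dimensional. Concretely, I would take an arbitrary independent set $M = \{[x_\alpha] : \alpha \in I\}$, choose representatives $x_\alpha \in L$, and form the subspace $W = \spann\{x_\alpha + Z(L) : \alpha \in I\}$ of $L/Z(L)$. Writing $\widetilde{W}$ for its preimage in $L$, the first step is to verify that $\widetilde{W}$ is abelian: since $M$ is independent we have $[x_\alpha, x_\beta] = 0$ for all $\alpha, \beta \in I$, and because $Z(L)$ is central, bilinearity of the bracket gives $[u, v] = 0$ for all $u, v \in \widetilde{W}$. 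Consequently $P(W)$ is itself an independent set of $\Gamma_L$, it contains $M$, and $\dim W \leq \dim(L/Z(L)) < \infty$.

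Next I would invoke the hypothesis. As $P(W)$ is an independent set, it cannot be infinite, so $P(W)$ is finite. I would then split into two cases according to the base field. If $\mathbb{K}$ is finite, then $V(\Gamma_L) = P(L/Z(L))$ is already finite by Proposition \ref{basic}(1) (recall $\dim(L/Z(L)) \geq 2$ since $L$ is non-abelian), and hence $\alpha(\Gamma_L) \leq |V(\Gamma_L)| < \infty$ with nothing further to prove. If $\mathbb{K}$ is infinite, then the finiteness of $P(W)$ forces $\dim W \leq 1$, because the projective space of any vector space of dimension at least $2$ over an infinite field is infinite; therefore $|M| \leq |P(W)| \leq 1$. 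In either case $|M|$ is bounded above by a constant that does not depend on the chosen $M$, so $\alpha(\Gamma_L) < \infty$.

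The argument has no deep obstacle; its heart is the first step, namely recognizing that the span of a commuting family is again commuting, so that an independent set sits inside the finite-dimensional $P(W)$, to which the hypothesis can then be applied. The only genuine bookkeeping is the field dichotomy: over a finite field the conclusion is immediate since the whole graph is finite, while over an infinite field the same reasoning yields the much stronger conclusion that every independent set is a singleton, i.e.\ $\Gamma_L$ is complete and $\alpha(\Gamma_L) = 1$. The one implication I would double-check is that finiteness of $P(W)$ over an infinite field really does force $\dim W \leq 1$ (equivalently, that a projective line over an infinite field is infinite), since the infinite-field case rests entirely on it.
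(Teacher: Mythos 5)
Your proof is correct and rests on the same key idea as the paper's: two non-adjacent vertices have commuting representatives, so the projective line (the paper uses the pencil $\{[x+\alpha y] \mid \alpha \in \mathbb{K}\}$, you use $P(W)$ for the span $W$) is an independent set of cardinality comparable to $|\mathbb{K}|$, whence the hypothesis forces $\mathbb{K}$ to be finite and the whole graph $\Gamma_L$ to be finite. The only difference is organizational — the paper splits on whether $\Gamma_L$ is complete while you split on whether $\mathbb{K}$ is finite — so the two arguments are essentially the same.
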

\begin{proof}
    Consider the following two cases. First, if $\Gamma_L$ is complete, then $\alpha(\Gamma_L) = 1$. Next, we assume that there exist two non-adjacent vertices $[x], [y] \in V(\Gamma_L)$. Then $[x, y] = 0$ and $[x + \alpha y] \in V(\Gamma_L)$ for any $\alpha \in \mathbb{K}$ since $\{ x + Z(L), y + Z(L) \}$ is linearly independent in $L/Z(L)$. If we let $S = \{[x + \alpha y] \in V(\Gamma_L) \mid \alpha  \in \mathbb{K}\}$, then $S$ is an independent set and by assumption it must be finite. This implies that $\mathbb{K}$ is a finite field and the result is obtained.
\end{proof}

The following discussion presents results concerning a dominating set of $\Gamma_L$.

\begin{thm}\label{domi1}
      Let $L$ be a finite-dimensional non-abelian Lie algebra and
      $[x]$ a vertex in $\Gamma_L$. Then
     $\{[x]\}$ is a dominating set for $\Gamma_L$ if and only if $ C_L(x) = \spann \{x\} \oplus Z(L)$. 
\end{thm}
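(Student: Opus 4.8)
The plan is to unwind the definition of a dominating set into a statement about adjacency to $[x]$, and then translate it through the correspondence already recorded in the excerpt, namely $[x] \sim [y] \iff y \notin C_L(x)$. Since $\{[x]\}$ is a dominating set exactly when $N_{\Gamma_L}[\{[x]\}] = V(\Gamma_L)$, the condition means precisely that every vertex $[y] \in V(\Gamma_L)$ is either equal to $[x]$ or satisfies $[y] \sim [x]$, i.e. $y \notin C_L(x)$. The whole proof is then a matter of matching this ``every non-neighbour of $[x]$ is $[x]$ itself'' condition with the algebraic equality $C_L(x) = \spann\{x\} \oplus Z(L)$. I would first record the easy inclusion $\spann\{x\} \oplus Z(L) \subseteq C_L(x)$, which holds for any $x$ because $x$ commutes with itself and $Z(L)$ commutes with everything; only the reverse inclusion carries content.

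For the forward direction I would assume $\{[x]\}$ dominates and prove $C_L(x) \subseteq \spann\{x\} \oplus Z(L)$. Take $y \in C_L(x)$. If $y \in Z(L)$ we are done, so suppose $y \in L \setminus Z(L)$, which makes $[y]$ an honest vertex of $\Gamma_L$. Because $y \in C_L(x)$ we have $[x,y] = 0$, so $[y] \not\sim [x]$; domination then forces $[y] = [x]$. I would convert this projective equality into the linear statement $y = \lambda x + z$ with $\lambda \neq 0$ and $z \in Z(L)$, placing $y$ in $\spann\{x\} \oplus Z(L)$. For the reverse direction I would assume the equality and take any vertex $[y] \neq [x]$; if $[y]$ failed to be adjacent to $[x]$ then $y \in C_L(x) = \spann\{x\} \oplus Z(L)$, so $y = \lambda x + z$, and since $y \notin Z(L)$ the coefficient $\lambda$ is nonzero, giving $[y] = [x]$, a contradiction. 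Hence every vertex other than $[x]$ is adjacent to $[x]$, so $\{[x]\}$ dominates.

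The argument has no real analytic obstacle; the only point demanding care is the bookkeeping of the projective identification. One must consistently remember that the vertices of $\Gamma_L$ are the classes $[y]$ with $y \in L \setminus Z(L)$ (so central elements of $C_L(x)$ never produce vertices and must be handled separately), and that $[y] = [x]$ is equivalent to $y - \lambda x \in Z(L)$ for some nonzero scalar $\lambda$, i.e. to membership of $y$ in $\spann\{x\} \oplus Z(L)$ via a genuinely nonzero $x$-component. Keeping the case $y \in Z(L)$ distinct from $y \in L \setminus Z(L)$, and tracking the nonvanishing of $\lambda$ in both directions, is the one place where a careless translation could slip; everything else is a direct application of the adjacency criterion.
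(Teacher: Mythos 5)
Your proposal is correct and follows essentially the same argument as the paper: both directions unwind domination into the adjacency criterion $[x]\sim[y] \iff y \notin C_L(x)$, use $[y]=[x] \iff y - \lambda x \in Z(L)$ for some nonzero $\lambda$, and conclude; your explicit handling of the case $y \in Z(L)$ and of the trivial inclusion $\spann\{x\} \oplus Z(L) \subseteq C_L(x)$ is just a slightly more careful write-up of what the paper leaves implicit.
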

\begin{proof}
    Assume that $\{[x]\}$ is a dominating set for $\Gamma_L$. Let $y \in C_L(x) \setminus Z(L)$. Then $[x, y] = 0$ and $[y]$ is a vertex in $\Gamma_L$. The assumption forces $[x] = [y]$. Thus, $y - \lambda x \in Z(L)$ for some $\lambda \in \mathbb{K} \setminus \{0\}$, that is, $y \in \spann \{x\} \oplus Z(L)$, consequently, $C_L(x) = \spann \{x\} \oplus Z(L)$.

    On the other hand, suppose that $C_L(x) = \spann \{x\} \oplus Z(L)$. Let $[y] \in V(\Gamma_L) \setminus \{[x]\}$. Then $y \notin \spann \{x\} \oplus Z(L)$, and so it does not belong to $C_L(x)$. Thus, $[x] \sim [y]$, and this completes the proof.
\end{proof}

\begin{remark}
    If $L$ is a $2$-dimensional non-abelian Lie algebra over $\mathbb{Z}_2$, then the ``only if" part of Theorem \ref{domi1} holds. Thus, $\gamma(\Gamma_L) = 1$. In fact, it is also clear from the fact that $\Gamma_L$ is a triangle.
\end{remark}

The following theorem provides a criterion for a subset of $V(\Gamma_L)$ to be a dominating set of $\Gamma_L$. 

\begin{thm}\label{domi}
A subset $S \subseteq V(\Gamma_L)$ is a dominating set if and only if 
\[
C_L(\mathcal{P}(S)) \setminus Z(L) \subseteq  \bigcup\limits_{x_\alpha \in \mathcal{P}(S)} ( \spann \{x_\alpha\} \oplus Z(L)).
\]
\end{thm}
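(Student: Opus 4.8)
The plan is to prove the equivalence by a direct chain of reformulations that rewrites the domination condition, one vertex at a time, as the stated set containment. Fix a set of representatives $\mathcal{P}(S) = \{x_\alpha\}$, so that each $x_\alpha \in L \setminus Z(L)$ and $S = \{[x_\alpha]\}$. I would start from the definition of a dominating set: $S$ dominates $\Gamma_L$ precisely when every $[y] \in V(\Gamma_L)$ either lies in $S$ or is adjacent to some $[x_\alpha] \in S$. Using the adjacency criterion recorded at the beginning of the section, namely $[y] \sim [x_\alpha] \iff y \notin C_L(x_\alpha)$, the clause ``adjacent to some $[x_\alpha]$'' becomes ``$y \notin C_L(x_\alpha)$ for some $\alpha$''.

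Next I would translate the two remaining clauses into explicit membership statements in $L$. Since each $x_\alpha \notin Z(L)$, the sum $\spann\{x_\alpha\} \oplus Z(L)$ is genuinely direct, and for a vertex $[y]$ one has $[y] = [x_\alpha]$ exactly when $y \in \spann\{x_\alpha\} \oplus Z(L)$; hence $[y] \in S$ iff $y$ lies in the union $\bigcup_\alpha (\spann\{x_\alpha\} \oplus Z(L))$ appearing on the right-hand side of the claim. Dually, ``$[y]$ is adjacent to no $[x_\alpha]$'' means $y \in C_L(x_\alpha)$ for every $\alpha$, that is, $y \in \bigcap_\alpha C_L(x_\alpha) = C_L(\mathcal{P}(S))$.

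With these two translations in hand, I would pass to the contrapositive, where the logic is cleanest. The set $S$ fails to be dominating exactly when some vertex $[y]$ is neither in $S$ nor adjacent to any $[x_\alpha]$; by the reformulations above this says there exists $y \in L \setminus Z(L)$ with $y \in C_L(\mathcal{P}(S))$ but $y \notin \bigcup_\alpha (\spann\{x_\alpha\} \oplus Z(L))$. Because $(L \setminus Z(L)) \cap C_L(\mathcal{P}(S)) = C_L(\mathcal{P}(S)) \setminus Z(L)$, the existence of such a $y$ is precisely the failure of the containment $C_L(\mathcal{P}(S)) \setminus Z(L) \subseteq \bigcup_\alpha (\spann\{x_\alpha\} \oplus Z(L))$. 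Negating both statements yields the theorem.

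I expect the work here to be a matter of careful bookkeeping rather than genuine difficulty. The two points to verify are that the equivalence $[y] \in S \iff y \in \bigcup_\alpha(\spann\{x_\alpha\}\oplus Z(L))$ is independent of the choice of representatives (it is, since $[y]=[x_\alpha]$ depends only on the projective point), and that the quantifier structure is handled correctly when taking the contrapositive: adjacency to $S$ is an existential over $\alpha$, while simultaneous centralizing is a universal, and these interchange correctly under negation. A subtle feature worth noting is that the candidate witness $[y]$ ranges over all of $V(\Gamma_L)$, so the admissible representatives $y$ are exactly the elements of $L \setminus Z(L)$; this is precisely what forces the $\setminus Z(L)$ to appear on the left side of the containment.
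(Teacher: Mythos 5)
Your proposal is correct and takes essentially the same route as the paper's proof: both rest on the two translations $[y] \sim [x_\alpha] \iff y \notin C_L(x_\alpha)$ and $[y] = [x_\alpha] \iff y \in \spann\{x_\alpha\} \oplus Z(L)$ (for $y \notin Z(L)$), and the paper's converse direction is precisely your contrapositive argument. The only difference is organizational—you run one chain of equivalences and negate, while the paper proves the two implications separately.
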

\begin{proof}
    Suppose that $S \subseteq V(\Gamma_L)$ is a dominating set, and let $y \in C_L(\mathcal{P}(S)) \setminus Z(L)$. Then $[y] \in V(\Gamma_L)$ and $[x_\alpha, y] = 0$ for all $x_\alpha \in \mathcal{P}(S)$. Since $S$ is a dominating set, $[y] = [x_\beta]$ for some $x_\beta \in \mathcal{P}(S)$, which implies $y - \lambda x_\beta \in Z(L)$ for some $\lambda \in \mathbb{K} \setminus \{0\}$. Thus, $y \in \spann \{x_\beta\} \oplus Z(L)$.
    
    Conversely, assume that $C_L(\mathcal{P}(S)) \setminus Z(L) \subseteq  \bigcup\limits_{\alpha \in I} ( \spann \{x_\alpha\} \oplus Z(L))$. Let $[y] \in V(\Gamma_L) \setminus S$. Then $[y] \neq [x_\alpha]$ for all $x_\alpha \in \mathcal{P}(S)$, and so $y \notin \cup_{x_\alpha \in \mathcal{P}(S)} ( \spann \{x_\alpha\} \oplus Z(L))$. Thus, $y \notin C_L(\mathcal{P}(S)) \setminus Z(L)$. Therefore, $[y]$ is adjacent to at least one element of $S$. Therefore, $S$ is a dominant set of $\Gamma_L$.
\end{proof}

Theorem \ref{domi} implies the following corollary for a semisimple Lie algebra. 

\begin{cor}\label{Cartan}
    If $L$ is a finite-dimensional semisimple Lie algebra over an algebraically closed field of characteristic $0$ with a Cartan subalgebra $H$, then the projective space $P(H)$ is a dominating set of $\Gamma_L$. 
\end{cor}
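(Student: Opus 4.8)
The plan is to apply Theorem \ref{domi} directly with $S = P(H)$, so that the entire task reduces to verifying the single containment
\[
C_L(\mathcal{P}(S)) \setminus Z(L) \subseteq \bigcup_{x_\alpha \in \mathcal{P}(S)} ( \spann \{x_\alpha\} \oplus Z(L)).
\]
The first observation I would record is that, since $L$ is semisimple, its center is trivial, $Z(L) = 0$. Hence $P(L/Z(L)) = P(L)$, and each summand $\spann\{x_\alpha\} \oplus Z(L)$ on the right-hand side collapses to the line $\spann\{x_\alpha\}$.

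Next I would unwind the right-hand side. By definition, $\mathcal{P}(S)$ contains exactly one representative from each point of $P(H)$, that is, from each one-dimensional subspace of $H$. Since every nonzero element of $H$ lies on some such line, we get $\bigcup_{x_\alpha \in \mathcal{P}(S)} \spann\{x_\alpha\} = H$, so the right-hand side of the displayed containment is exactly $H$.

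For the left-hand side I would compute $C_L(\mathcal{P}(S))$. Because the Lie bracket is bilinear, the centralizer of a set equals the centralizer of its linear span, and $\spann \mathcal{P}(S) = H$; therefore $C_L(\mathcal{P}(S)) = C_L(H)$. At this point I invoke the key structural input: over an algebraically closed field of characteristic $0$, a Cartan subalgebra of a semisimple Lie algebra is a maximal toral subalgebra and is self-centralizing, so $C_L(H) = H$. Consequently $C_L(\mathcal{P}(S)) \setminus Z(L) = H \setminus \{0\} \subseteq H$, the containment of Theorem \ref{domi} holds, and $P(H)$ is a dominating set of $\Gamma_L$.

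The only nontrivial ingredient — and the step I would flag as the crux — is the self-centralizing property $C_L(H) = H$; everything else is a bookkeeping translation between the projective/representative language of Theorem \ref{domi} and the linear-algebraic statement about $H$. This property is a standard fact from the structure theory of semisimple Lie algebras over algebraically closed fields of characteristic $0$, and I would simply cite it from a standard reference rather than reprove it here.
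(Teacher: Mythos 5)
Your proof is correct and follows essentially the same route as the paper: both reduce the claim to the containment criterion of Theorem \ref{domi}, using triviality of $Z(L)$ and the self-centralizing property $C_L(H)=H$ of a Cartan subalgebra, so that both sides of the containment become $H$ (up to removing $0$). The extra bookkeeping you supply — identifying $C_L(\mathcal{P}(P(H)))$ with $C_L(H)$ via bilinearity and the union of representative lines with $H$ — is exactly what the paper's proof asserts more tersely.
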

\begin{proof}
    It is well-known that $H = C_L(H)$ and $Z(L)$ is trivial (see \cite{H12}). Note that $C_L( \mathcal{P}(P(H))) = C_L(H)$. Thus, 
    $$
    C_L( \mathcal{P}(P(H))) = H \subseteq \bigcup\limits_{x \in \mathcal{P}(P(H))} \spann \{x\}.
    $$
   By Theorem \ref{domi}, the proof is complete.
\end{proof}

\begin{thm}\label{finitedomi}
    Let $X$ be a basis for $L$. Then $S = \{[x] \mid x \in X \setminus Z(L)\}$ is a dominating set for $\Gamma_L$. Consequently, $L$ always contains a dominating set of $\Gamma_L$ generating a non-abelian subalgebra and  $\gamma (\Gamma_L) \leq |X \setminus Z(L)| < \infty$.
\end{thm}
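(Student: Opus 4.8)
The plan is to derive all three assertions from the centralizer criterion of Theorem \ref{domi}, the essential input being the computation of the centralizer of the representative set. I would take $\mathcal{P}(S)$ to consist of the basis vectors $x \in X \setminus Z(L)$, choosing one per projective point; note that if $[x] = [x']$ for two such basis vectors, then $x' = \lambda x + z$ with $\lambda \neq 0$ and $z \in Z(L)$, so $[y, x] = 0 \iff [y, x'] = 0$ for every $y \in L$, and hence $C_L(\mathcal{P}(S)) = C_L(X \setminus Z(L))$ independently of the choice. The first step is then to show $C_L(X \setminus Z(L)) = Z(L)$: the inclusion $Z(L) \subseteq C_L(X \setminus Z(L))$ is immediate, and for the reverse, if $y$ satisfies $[y, x] = 0$ for all $x \in X \setminus Z(L)$, then $[y, x] = 0$ holds automatically for the remaining basis vectors $x \in X \cap Z(L)$, so $[y, x] = 0$ for every element of the basis $X$, whence $[y, L] = 0$ by bilinearity and $y \in Z(L)$. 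Consequently $C_L(\mathcal{P}(S)) \setminus Z(L) = \emptyset$, which is trivially contained in the right-hand side of the criterion in Theorem \ref{domi}, so $S$ is a dominating set.

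For the claim that $\mathcal{P}(S)$ generates a non-abelian subalgebra, I would exploit that $L$ is non-abelian: choose $a, b \in L$ with $[a, b] \neq 0$ and expand them in the basis as $a = \sum_i \alpha_i x_i$, $b = \sum_j \beta_j x_j$. Since any basis vector lying in $Z(L)$ kills every bracket, the nonvanishing of $[a, b] = \sum_{i,j} \alpha_i \beta_j [x_i, x_j]$ forces some pair $x_i, x_j \in X \setminus Z(L)$ with $[x_i, x_j] \neq 0$. Thus the subalgebra generated by $\mathcal{P}(S)$ contains two non-commuting elements and is non-abelian.

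The size bound follows at once: as $S$ is a dominating set, $\gamma(\Gamma_L) \leq |S| \leq |X \setminus Z(L)| \leq |X| = \dim L < \infty$, using that $L$ is finite-dimensional. I do not anticipate a genuine obstacle; the only point requiring care is the bookkeeping around possibly coincident projective points when fixing $\mathcal{P}(S)$, and this is dispatched by the observation above that scalar and central shifts leave centralizers unchanged.
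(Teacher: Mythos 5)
Your proof is correct and follows essentially the same route as the paper's: take $\mathcal{P}(S)$ to be the non-central basis vectors, show $C_L(\mathcal{P}(S)) = Z(L)$, and invoke Theorem \ref{domi}, then deduce non-abelianness and the cardinality bound. You are in fact more careful than the paper, which glosses over possibly coincident projective points among the basis vectors (writing $|S| = |X\setminus Z(L)|$ rather than $\leq$) and asserts without proof that $\spann(\mathcal{P}(S))$ is a non-abelian subalgebra, whereas your argument correctly works with the subalgebra \emph{generated} by $\mathcal{P}(S)$ (the span alone need not be closed under the bracket, e.g.\ for the Heisenberg algebra).
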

\begin{proof}
    Since $X$ is linear independent, we can take $\mathcal{P}(S) = X \setminus Z(L)$. Thus, $C_L(\mathcal{P}(S)) = Z(L)$.
    By Theorem \ref{domi}, $S$ is a dominating set for $\Gamma_L$. Moreover, $\spann (\mathcal{P}(S))$ is a non-abelian subalgebra and $\gamma (\Gamma_L) \leq |S| =  |X \setminus Z(L)| < \infty$.
\end{proof}

\begin{cor}
    Let $L$ be a finite dimensional non-abelian Lie algebra with center of codimension two and $\dim (C_L(x) / Z(L)) > 1$ for all $x \in L \setminus Z(L)$. Then $\gamma (\Gamma_L) = 2$.
\end{cor}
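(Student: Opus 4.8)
The plan is to pin down $\gamma(\Gamma_L)$ by proving the two inequalities $\gamma(\Gamma_L) \le 2$ and $\gamma(\Gamma_L) \ge 2$ separately. Since $L$ is non-abelian, $\Gamma_L$ has at least one vertex and $\gamma(\Gamma_L) \ge 1$, so the whole task reduces to exhibiting a dominating set of size two and ruling out any dominating set consisting of a single vertex.

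For the upper bound I would appeal directly to Theorem~\ref{finitedomi}. The assumption that $Z(L)$ has codimension two says $\dim(L/Z(L)) = 2$, so I would choose a basis $X$ of $L$ by first taking a basis of $Z(L)$ and extending it by exactly two vectors $x_1$ and $x_2$. With this choice $X \setminus Z(L) = \{x_1, x_2\}$, and because $x_1, x_2$ are linearly independent modulo $Z(L)$ the projective points $[x_1]$ and $[x_2]$ are distinct. Theorem~\ref{finitedomi} then yields that $S = \{[x_1], [x_2]\}$ is a dominating set, so $\gamma(\Gamma_L) \le 2$.

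For the lower bound I would rule out $\gamma(\Gamma_L) = 1$ using Theorem~\ref{domi1}. That theorem states that a singleton $\{[x]\}$ dominates $\Gamma_L$ if and only if $C_L(x) = \spann\{x\} \oplus Z(L)$, i.e. if and only if $\dim(C_L(x)/Z(L)) = 1$. The hypothesis $\dim(C_L(x)/Z(L)) > 1$ for every $x \in L \setminus Z(L)$ forbids this for all vertices at once, so no single vertex can dominate and $\gamma(\Gamma_L) \ge 2$. Combining the two bounds gives $\gamma(\Gamma_L) = 2$.

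The hard part will not be any of the individual steps, each of which is immediate from the earlier results, but rather reconciling the two hypotheses. Since $C_L(x)/Z(L)$ is always a subspace of the two-dimensional space $L/Z(L)$ that contains the nonzero class $[x]$, its dimension is $1$ or $2$, and the value $2$ would force $C_L(x) = L$, i.e. $x \in Z(L)$. Thus the centralizer condition $\dim(C_L(x)/Z(L)) > 1$ is an extremely restrictive constraint under codimension two, and the main point to verify before trusting the sandwich argument is that the two standing assumptions are genuinely compatible on a non-abelian $L$; this compatibility, rather than the graph-theoretic estimates, is where the real content of the statement lies.
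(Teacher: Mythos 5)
Your sandwich argument is the paper's own proof: the lower bound comes from Theorem~\ref{domi1} and the upper bound from Theorem~\ref{finitedomi}, exactly as you do it. Your version is in fact slightly more careful, since the paper takes an \emph{arbitrary} basis $X$ of $L$ and asserts $|X \setminus Z(L)| \leq 2$, which is only guaranteed when $X$ is chosen, as you choose it, by extending a basis of $Z(L)$ (an arbitrary basis of the Heisenberg algebra, for instance, can avoid the center entirely).

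The observation in your final paragraph, however, should be pushed to its conclusion, because it is decisive and you stop just short of it. As you compute, $C_L(x)/Z(L)$ is a subspace of the two-dimensional space $L/Z(L)$, so the hypothesis $\dim(C_L(x)/Z(L)) > 1$ forces $C_L(x) = L$, i.e.\ $x \in Z(L)$, contradicting $x \in L \setminus Z(L)$. Since $L$ is non-abelian, $L \setminus Z(L)$ is non-empty, so \emph{no} Lie algebra satisfies both standing hypotheses: the corollary is vacuously true, and both your argument and the paper's prove it only in that vacuous sense. Worse, the intended content is the opposite of what is stated: for any genuine non-abelian $L$ with $\dim(L/Z(L)) = 2$, writing $u = ax + by + z_1$ and $v = cx + dy + z_2$ with $z_1, z_2 \in Z(L)$ gives $[u,v] = (ad-bc)[x,y]$ with $[x,y] \neq 0$, so every noncentral $u$ has $C_L(u) = \spann\{u\} \oplus Z(L)$, and Theorem~\ref{domi1} then yields $\gamma(\Gamma_L) = 1$, not $2$. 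So the defect you correctly sensed is not in your two bounds, each of which is a valid conditional deduction, but in the statement itself: its hypotheses are unsatisfiable, and for the algebras it is presumably aiming at, the conclusion fails.
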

\begin{proof}
    By Theorem \ref{domi1}, $\gamma (\Gamma_L) \geq 2$. Let $X$ be a basis for $L$. From Theorem \ref{finitedomi}, $\gamma (\Gamma_L) \leq |X \setminus Z(L)| \leq 2$. This completes the proof.
\end{proof}

\begin{remark}
    The upper bound of Theorem \ref{finitedomi} may not be sharp for some Lie algebras. For example, if $L = \mathfrak{sl}_2(\mathbb{C})$ with the standard basis $X = \{x, y, h\}$, where the Lie bracket relations are: 
    \[
    [h, x] = 2x, \ \ [h, y] = -2y, \ \ [x, y] = h,
    \] then $|X \setminus Z(L)| = 3$. However, if $S = \{ [x + y], [h]\} \subseteq V(\Gamma_L)$, then by taking $\mathcal{P}(S) = \{x + y, h\}$, the centralizer $C_L(\mathcal{P}(S))$ is trivia. By Theorem \ref{domi}, $S$ is a dominating set for $\Gamma_L$, and so $\gamma(\Gamma_L) \leq 2$.
    In fact, $\gamma(\Gamma_L) = 1$ by Theorem \ref{domi1} because $\dim (C_L(x)) = 1$. More generally, it can be concluded by Corollary \ref{Cartan} that for a finite dimensional semisimple Lie algebra $L$ over $\mathbb{C}$ of rank one, $\gamma(\Gamma_L) = 1$.
\end{remark}

We now discuss the domination number in the context of a general semisimple Lie algebra over $\mathbb{C}$.

\begin{cor}
    If $L$ is a finite-dimensional semisimple Lie algebra over $\mathbb{C}$ of rank $t > 1$, then $\gamma(\Gamma_L) = 2$.
\end{cor}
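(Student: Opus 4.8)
The plan is to prove the two inequalities $\gamma(\Gamma_L) \geq 2$ and $\gamma(\Gamma_L) \leq 2$ separately. Since $L$ is semisimple, its center $Z(L)$ is trivial, and the minimal dimension of a centralizer $C_L(x)$ over nonzero $x$ equals the rank $t$ (this minimum is attained exactly on the regular elements). Because $t > 1$, every $x \in L \setminus \{0\}$ satisfies $\dim C_L(x) \geq t \geq 2 > 1 = \dim \spann\{x\}$, so $C_L(x) \neq \spann\{x\} = \spann\{x\} \oplus Z(L)$. By Theorem \ref{domi1}, no singleton $\{[x]\}$ can be a dominating set, and hence $\gamma(\Gamma_L) \geq 2$.

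For the upper bound, the idea is to exhibit a two-element dominating set by invoking Theorem \ref{domi}: it suffices to find $x_1, x_2 \in L$ whose common centralizer $C_L(x_1) \cap C_L(x_2)$ is trivial, since then $C_L(\mathcal{P}(S)) \setminus Z(L) = \emptyset$ and the containment in Theorem \ref{domi} holds vacuously. I would work in the root space decomposition $L = H \oplus \bigoplus_{\alpha \in \Phi} L_\alpha$ relative to a Cartan subalgebra $H$, fixing nonzero root vectors $e_\alpha \in L_\alpha$. Choose $x_1 = h$ a regular semisimple element, so that $C_L(h) = H$, and set $x_2 = \sum_{\alpha \in \Phi} e_\alpha$. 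Then $C_L(h) \cap C_L(x_2) = H \cap C_L(x_2)$, and for $w \in H$ we have $[w, x_2] = \sum_{\alpha \in \Phi} \alpha(w) e_\alpha$, which vanishes only when $\alpha(w) = 0$ for every root $\alpha$. Since the roots of a semisimple Lie algebra span $H^*$, this forces $w = 0$. Hence $C_L(h) \cap C_L(x_2) = 0 = Z(L)$, so $S = \{[h], [x_2]\}$ (two distinct vertices, as $h \in H$ while $x_2 \notin H$) is a dominating set by Theorem \ref{domi}, yielding $\gamma(\Gamma_L) \leq 2$. Combining the two bounds gives $\gamma(\Gamma_L) = 2$.

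As for the main obstacle, the lower bound rests on the structural fact that the rank is the minimal centralizer dimension in a semisimple Lie algebra; this is standard but must be cited carefully, and it is exactly where the hypothesis $t > 1$ is used. The crux of the upper bound is producing a pair with trivial joint centralizer; the delicate point is to verify that some concrete choice — here a regular semisimple $h$ together with the sum of all root vectors — really kills $H$ under the bracket, and this reduces precisely to the fact that $\Phi$ spans $H^*$. Once that reduction is isolated, the remaining verification is routine.
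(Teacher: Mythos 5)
Your proof is correct, and it follows the same overall strategy as the paper: the lower bound via Theorem \ref{domi1} together with $\dim C_L(x) \geq t \geq 2$ for all nonzero $x$, and the upper bound via an explicit two-element dominating set built from the root space decomposition, with the sum of all root vectors as one of the two elements. The difference is in the other element and in how domination is verified, and here your version is actually the more careful one. The paper takes an \emph{arbitrary} basis $\{h_1, \dots, h_t\}$ of $H$ and uses the vertex $[\sum_i h_i]$, checking adjacency directly: for $a = \sum_j c_j h_j + \sum_\beta b_\beta x_\beta$ with some $b_{\beta_0} \neq 0$, it picks $h_{i_0}$ with $\beta_0(h_{i_0}) \neq 0$ and concludes $[a, \sum_i h_i] \neq 0$. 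That step is delicate: since the components $b_\beta\,\beta(\sum_i h_i)\,x_\beta$ lie in distinct root spaces, what is really needed is $\beta_0(\sum_i h_i) \neq 0$, and this can fail by cancellation for an arbitrary basis --- the element $\sum_i h_i$ need not be regular. Your choice of a regular semisimple $h$, so that $C_L(h) = H$, is precisely the hypothesis that makes this kind of argument airtight; and your reduction via Theorem \ref{domi}, showing the joint centralizer $C_L(h) \cap C_L(\sum_{\alpha} e_\alpha) = H \cap C_L(\sum_{\alpha} e_\alpha) = 0$ because $\Phi$ spans $H^*$, replaces the paper's case analysis on a general element by a single clean computation. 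The only extra input your route requires is the existence of regular semisimple elements, which over $\mathbb{C}$ is standard (finitely many root hyperplanes cannot cover $H$).
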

\begin{proof}
    Let $L$ be a finite-dimensional semisimple Lie algebra over $\mathbb{C}$ of rank $t > 1$.
    It is well-known that $\dim (C_L(x)) \geq t$ for all $x \in L \setminus \{0\}$ and $Z(L)$ is trivial. 
    By Theorem \ref{domi1}, $\gamma(\Gamma_L) \geq 2$. Now, consider the Cartan decomposition \cite{H12}:
\[
L = H \oplus \bigoplus_{\alpha \in \Phi}L_\alpha,
\]
where $H$ is a Cartan subalgebra of $L$ and $\Phi$ is a set of roots relative to $H$. Suppose that $\{h_1, h_2, \ldots, h_t \}$ is a basis of $H$ and $L_\alpha = \spann \{ x_\alpha \}$ for each $\alpha \in \Phi$. Let 
\[
S = \left \{ \left[ \sum_{i = 1}^t h_i \right], \ \  \left[\sum_{\alpha \in \Phi} x_\alpha \right] \right \}.
\]
Let $a = \sum_{j = 1}^t c_j h_j + \sum_{\beta \in \Phi} b_\beta x_\beta$ be any element in $L \setminus Z(L)$. So, $c_j$'s and $b_\beta$'s are not all zero. If there exists $b_{\beta_0} \neq 0$, we can choose $h_{i_0}$ such that $\beta_0 (h_{i_0}) \neq 0$. This deduces that $[a, \sum_{i = 1}^t h_i] \neq 0$, that is, $[a] \sim [\sum_{i = 1}^t h_i]$, since $\{x_\beta \mid \beta \in \Phi \}$ is linearly independent. Finally, suppose that $a = \sum_{j = 1}^t c_j h_j$ and $c_{i_0} \neq 0$. Then there must be $x_{\alpha_0}$ such that $[h_{i_0}, x_{\alpha_0}] \neq 0$, otherwise $h_{i_0}$ would be in $Z(L) = \{ 0 \}$. This implies $[a, \sum_{\alpha \in \Phi} x_\alpha] \neq 0$ since $\{x_\beta \mid \beta \in \Phi \}$ is linearly independent. Hence, $[a] \sim [\sum_{\alpha \in \Phi} x_\alpha]$. This completes the proof.
\end{proof}


To end this section, we present a result regarding the chromatic number of $\Gamma_L$.

\begin{thm}\label{chromatic}
     Let $L$ be a finite-dimensional non-abelian Lie algebra over a finite field. Then $\chi (\Gamma_L)$ equals the minimum number of abelian subalgebras of $L$ whose union is $L$.
\end{thm}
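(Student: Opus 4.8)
The plan is to set up a dictionary between proper colorings of $\Gamma_L$ and covers of $L$ by abelian subalgebras, translating ``color class'' into ``abelian subalgebra'' in both directions. Since the base field is finite, $L$ is a finite set and $\Gamma_L$ is a finite graph, so both $\chi(\Gamma_L)$ and the minimum number of abelian subalgebras whose union is $L$ are finite quantities; it therefore suffices to prove that each is bounded above by the other.

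For the inequality $\chi(\Gamma_L) \leq$ (minimum cover), I would start from a cover $L = A_1 \cup \cdots \cup A_k$ by abelian subalgebras. For each vertex $[x]$ I would fix a representative $x \in L \setminus Z(L)$, which is a nonzero element of $L = \bigcup_i A_i$, and choose an index $i$ with $x \in A_i$, coloring $[x]$ with this $i$. This uses at most $k$ colors. To check properness, observe that any two vertices assigned the same color $i$ have their chosen representatives lying in the abelian algebra $A_i$, so their bracket vanishes and the vertices are non-adjacent. Hence $\chi(\Gamma_L) \leq k$ for every such cover.

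For the reverse inequality (minimum cover $\leq \chi(\Gamma_L)$), I would take an optimal proper coloring with color classes $M_1, \dots, M_k$ where $k = \chi(\Gamma_L)$; each $M_i$ is an independent set. For each $i$ I would set $A_i = \spann(\mathcal{P}(M_i)) + Z(L)$. Since distinct vertices of $M_i$ are non-adjacent, the representatives in $\mathcal{P}(M_i)$ pairwise commute, so their span is abelian, and adjoining the central ideal $Z(L)$ keeps it an abelian subalgebra; this is precisely the computation already carried out in the Proposition on maximal independent sets. Every $x \in L \setminus Z(L)$ satisfies $[x] \in M_i$ for some $i$, whence $x \in \spann\{x_\alpha\} \oplus Z(L) \subseteq A_i$, and $Z(L) \subseteq A_i$ for all $i$, so $L = A_1 \cup \cdots \cup A_k$. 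This exhibits a cover by $k$ abelian subalgebras, giving minimum cover $\leq \chi(\Gamma_L)$, and combining the two bounds yields the asserted equality.

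The one point that requires care, and which I regard as the crux of the argument, is the translation between independent sets and abelian subalgebras. One must verify that pairwise non-adjacency of lines forces the corresponding representatives to generate a genuinely abelian subalgebra (closure under the bracket is immediate, since all pairwise brackets vanish and so the bracket of any two elements of the span is $0$), and conversely that membership of representatives in a common abelian subalgebra guarantees non-adjacency. Beyond this bookkeeping I expect no serious obstacle, since the finiteness of the field removes any concern about the two numbers being infinite and makes the minimum in the statement well defined.
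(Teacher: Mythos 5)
Your proof is correct and follows essentially the same route as the paper: one direction colors vertices by membership of representatives in the abelian subalgebras of a minimum cover, and the other turns each color class $M_i$ into the abelian subalgebra $\spann(\mathcal{P}(M_i)) + Z(L)$, exactly as in the paper's argument. Your write-up is if anything slightly more careful than the paper's (resolving overlaps among the $A_i$ and noting $Z(L)$ lies in every $A_i$), but there is no substantive difference in approach.
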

\begin{proof}
    Note that a finite abelian subalgebra covering of $L$ always exists, for instance, taking all one-dimensional subalgebras from each element in $L$. Let $k$ be the minimum number of abelian subalgebras of $L$ whose union is $L$ and those algebras are $A_1, A_2, \ldots, A_k$. Then the vertices of $\Gamma_L$ in $P(\spann\{A_i, Z(L)\}/Z(L))$ are not adjacent to each other. This implies $\chi (\Gamma_L) \leq k$. To show the converse, we let $\chi = \chi (\Gamma_L)$. Then there exist $\chi$ independent subsets $M_1, M_2, \ldots, M_\chi$ of vertices of $\Gamma_L$ whose union is $V(\Gamma_L) = P(L \setminus Z(L))$. Recall that for $i = 1, 2, \ldots, \chi$, $\mathcal{P}(M_i) = \{x \in L \mid x \text{ is a representative of } [x] \text{ in } M_i\}$.
    We see that for each $j$, $N_j = \spann (\mathcal{P}(M_j), Z(L))$ is an abelian subalgebra of $L$ and $L = \cup_{i = 1}^\chi N_j$. Hence, $\chi(\Gamma_L) \geq k$.
\end{proof}

\section{Isomorphic non-commuting graphs}\label{ISO}

Recall that an isomorphism $\phi$ of graphs $G$ and $H$ is a bijection between the vertex sets of $G$ and $H$
such that any two vertices $u$ and $v$ of $G$ are adjacent in $G$ if and only if $\phi(u)$ and $\phi(v)$ are adjacent in $H$. If there exists such an isomorphism for $G$ and $H$, we say that $G$ and $H$ are isomorphic and write $G \cong H$. Let $\mathfrak{g}$ and $\mathfrak{h}$ be two finite-dimensional Lie algebras over fields $\mathbb{K}_1$ and $\mathbb{K}_2$, respectively. We note that $\Gamma_{\mathfrak{g}} \cong \Gamma_{\mathfrak{h}}$ if and only if there exists a bijection 
\[
\phi : V(\Gamma_{\mathfrak{g}}) \rightarrow V(\Gamma_{\mathfrak{h}})
\]
such that for all distinct vertices $x, y \in V(\Gamma_{\mathfrak{g}})$, 
\[
[x, y] = 0 \text{ if and only if } [\phi (x), \phi (y)] = 0.
\]

It is clear that for any two Lie algebras $\mathfrak{g}$ and $\mathfrak{h}$ over a field $\mathbb{K}$, if $\mathfrak{g} \cong \mathfrak{h}$ (as Lie algebra), then $\Gamma_{\mathfrak{g}} \cong \Gamma_{\mathfrak{h}}$. One may ask what would happen if $\Gamma_{\mathfrak{g}} \cong \Gamma_{\mathfrak{h}}$ and we state a question here:

\begin{quest}\label{isomq1}
    Let $\mathfrak{g}$ and $\mathfrak{h}$ be two finite-dimensional Lie algebras over a field $\mathbb{K}$. Is it true that $\mathfrak{g} \cong \mathfrak{h}$?
\end{quest}

In general, isomorphic non-commuting graphs do not guarantee the preservation of Lie algebra properties. 

\begin{ex}
    Let $L_1$ be a Lie algebra over $\mathbb{F}_2$ with basis $\{x, y, z\}$, and let $L_2$ be a Lie algebra over $\mathbb{F}_2$ with basis $\{a, b, c\}$, where 
    \[
    [x, y] = z \text{ while } [a, b] = a.
    \]
    Then $\Gamma_{L_1} \cong \Gamma_{L_2}$, and both are $4$-regular graphs on $6$ vertices. However, $L_1$ is nilpotent, whereas $L_2$ is not. Consequently, $L_1$ and $L_2$ are not isomorphic as Lie algebras. 
\end{ex}

We can respond positively to Question \ref{isomq1} for some specific Lie algebras and for the class of CT Lie algebras in Subsection \ref{CT}.

\begin{thm}
    Let $L$ be a finite-dimensional non-abelian Lie algebra over $\mathbb{F}_q, q = 2, 3$ satisfying either
    \begin{enumerate}[label=(\Alph*)]
        \item $L$ is a $2$-step nilpotent with abelian $2$-dimensional $L/Z(L)$, or
        \item the quotient $L/Z(L)$ is $2$-dimensional non-abelian. 
    \end{enumerate}
    If $L$ is a finite-dimensional non-abelian Lie algebra such that $\Gamma_L \cong \Gamma_{\mathfrak{g}}$, then $\mathfrak{g}$ satisfies either (A) or (B).
\end{thm}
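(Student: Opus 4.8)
The plan is to show that, under hypothesis (A) or (B), the graph $\Gamma_L$ is a \emph{complete} graph whose order is determined solely by the size of the base field, and then to recover the structure of $\mathfrak{g}$ from this graph invariant. In both cases we have $\dim(L/Z(L)) = 2$, so $V(\Gamma_L) = P(L/Z(L))$ has exactly $q+1$ points; thus $|V(\Gamma_L)| = 3$ when $q = 2$ and $|V(\Gamma_L)| = 4$ when $q = 3$. First I would verify completeness: for distinct vertices $[u], [v]$, write $u = a\bar{x} + b\bar{y}$, $v = c\bar{x} + d\bar{y}$ modulo $Z(L)$ in a suitable basis $\{\bar{x},\bar{y}\}$ of $L/Z(L)$. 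In case (B) the image of $[u,v]$ in $L/Z(L)$ equals $(ad-bc)\bar{x}$, while in case (A) one has $[u,v] = (ad-bc)[x,y]$ with $0 \neq [x,y] \in Z(L)$. In either case the scalar $ad - bc$ is the determinant of the coordinate vectors, which is nonzero precisely because $[u] \neq [v]$ forces linear independence. Hence every pair of distinct vertices is adjacent, so $\Gamma_L \cong K_3$ (if $q=2$) or $\Gamma_L \cong K_4$ (if $q=3$). For the conclusion only the vertex count will actually be used, but completeness is a clean consistency check.

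Next I would transfer this to $\mathfrak{g}$. Since $\Gamma_L \cong \Gamma_{\mathfrak{g}}$, the order $|V(\Gamma_{\mathfrak{g}})| = |V(\Gamma_L)| \in \{3,4\}$ is finite. Because $\mathfrak{g}$ is non-abelian we have $\dim(\mathfrak{g}/Z(\mathfrak{g})) \geq 2$, since a one-dimensional central quotient would force all brackets to vanish and make $\mathfrak{g}$ abelian. Proposition \ref{basic}(1) then forces the base field of $\mathfrak{g}$ to be a finite field $\mathbb{F}_{q'}$. Writing $m = \dim(\mathfrak{g}/Z(\mathfrak{g})) \geq 2$, the order of $\Gamma_{\mathfrak{g}}$ equals $\frac{q'^{m}-1}{q'-1} = 1 + q' + \cdots + q'^{m-1}$, so I would solve $1 + q' + \cdots + q'^{m-1} \in \{3,4\}$ over prime powers $q' \geq 2$ and integers $m \geq 2$. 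Any $m \geq 3$ already gives $1 + q' + q'^2 \geq 7$, so $m = 2$, and then $1 + q' \in \{3,4\}$ yields exactly $(q',m) = (2,2)$ (order $3$) or $(q',m) = (3,2)$ (order $4$).

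Finally I would conclude structurally. At this point $\mathfrak{g}$ is a non-abelian Lie algebra over $\mathbb{F}_2$ or $\mathbb{F}_3$ with $\dim(\mathfrak{g}/Z(\mathfrak{g})) = 2$. Applying the classification of two-dimensional Lie algebras (\cite[Theorem 3.1]{EW06}) to $\mathfrak{g}/Z(\mathfrak{g})$, there are exactly two possibilities. If $\mathfrak{g}/Z(\mathfrak{g})$ is abelian, then $[\mathfrak{g},\mathfrak{g}] \subseteq Z(\mathfrak{g})$, whence $[[\mathfrak{g},\mathfrak{g}],\mathfrak{g}] = 0$ and $\mathfrak{g}$ is two-step nilpotent with abelian two-dimensional central quotient, which is condition (A). If $\mathfrak{g}/Z(\mathfrak{g})$ is non-abelian, then condition (B) holds verbatim. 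This closes the argument.

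The only genuinely delicate point is the finiteness step: one must exclude an infinite base field for $\mathfrak{g}$, and this is exactly what Proposition \ref{basic}(1) supplies once $\dim(\mathfrak{g}/Z(\mathfrak{g})) \geq 2$ has been established. Everything else reduces to the observation that both (A) and (B) collapse $\Gamma_L$ to a complete graph, so the graph retains no information beyond its order $q+1$, together with the short Diophantine computation identifying the only admissible pairs $(q',m)$.
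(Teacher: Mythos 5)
Your proof is correct, but it takes a genuinely different route from the paper. The paper proves this theorem as a one-line corollary of Theorem \ref{planar}: conditions (A)/(B) are, for Lie algebras over finite fields, equivalent to planarity of the non-commuting graph; planarity is a graph-isomorphism invariant, so it transfers from $\Gamma_L$ to $\Gamma_{\mathfrak{g}}$ and then back to the structure of $\mathfrak{g}$ via the converse direction of Theorem \ref{planar}. You instead use the \emph{order} of the graph as the invariant: (A)/(B) force $\Gamma_L$ to be $K_3$ or $K_4$, hence $|V(\Gamma_{\mathfrak{g}})| \in \{3,4\}$; non-abelianness gives $\dim(\mathfrak{g}/Z(\mathfrak{g})) \geq 2$, Proposition \ref{basic}(1) then forces the base field of $\mathfrak{g}$ to be finite, the count $1 + q' + \cdots + q'^{m-1} \in \{3,4\}$ pins down $(q',m) \in \{(2,2),(3,2)\}$, and the classification of two-dimensional Lie algebras converts this into (A) or (B). Two observations on what each approach buys. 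First, your route makes explicit a point the paper's citation glosses over: Theorem \ref{planar} is stated only for Lie algebras over finite fields, so applying its ``planar implies (1) or (2)'' direction to $\mathfrak{g}$ strictly requires first knowing that the field of $\mathfrak{g}$ is finite --- exactly the step you supply with Proposition \ref{basic}(1). Second, your argument proves something slightly stronger than stated: only the vertex count of $\Gamma_{\mathfrak{g}}$ is ever used, so the conclusion already follows from $|V(\Gamma_{\mathfrak{g}})| \in \{3,4\}$ without invoking the edge structure at all (as you note, completeness of $\Gamma_L$ serves only as a consistency check), whereas the paper's argument leans on the full machinery of Theorem \ref{planar} but is essentially immediate once that theorem is in place.
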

\begin{proof}
    The result follows directly from Theorem \ref{planar}.
\end{proof}

\bigskip

\subsection{CT Lie algebras} \label{CT}

A Lie algebra $L$ is called \textit{commutative transitive} (or CT) if for all $x, y, z \in L \setminus \{0\}$, if  
$[x, y] = [y, z] = 0$, then $[x, z] = 0$. Note that $L$ is CT if and only if its nonzero elements have abelian centralizers. It is also obvious that a CT Lie algebra with nontrivial center is abelian. In particular, every nilpotent CT Lie algebra is abelian.
For useful details about CT Lie algebras, the reader can consult \cite{Gor17, KM10}.

\begin{thm}\label{CTcomp}
    A Lie algebra $L$ with $Z(L) = 0$ is CT if and only if $\Gamma_L$ is a complete multipartite graph.
\end{thm}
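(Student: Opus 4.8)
The plan is to reduce the statement to the standard combinatorial characterization of complete multipartite graphs: a (possibly infinite) graph $G$ is complete multipartite if and only if the relation ``$u = v$ or $u \not\sim v$'' is an equivalence relation on $V(G)$, in which case its classes are precisely the partite sets. Equivalently, non-adjacency is transitive among \emph{distinct} vertices. I would first record this characterization, taking care to phrase it so that infinitely many partite sets are allowed (as the definition used in this paper permits, since $\mathbb{K}$ and hence $V(\Gamma_L)$ may be infinite). With it in hand, the whole problem becomes a matter of translating this transitivity condition across the bridge $[x] \sim [y] \iff [x, y] \neq 0$. Because $Z(L) = 0$, the vertices of $\Gamma_L$ are exactly the one-dimensional subspaces of $L$ and every representative lies in $L \setminus \{0\}$, so I can pass freely between projective points and nonzero vectors.

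For the forward direction, assume $L$ is CT. Given three distinct vertices $[x], [y], [z]$ with $[x] \not\sim [y]$ and $[y] \not\sim [z]$, the definitions give $[x, y] = [y, z] = 0$ with $x, y, z \in L \setminus \{0\}$, so the CT hypothesis yields $[x, z] = 0$, i.e.\ $[x] \not\sim [z]$. Hence non-adjacency is transitive on distinct vertices, and $\Gamma_L$ is complete multipartite.

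For the converse, assume $\Gamma_L$ is complete multipartite and take arbitrary nonzero $x, y, z$ with $[x, y] = [y, z] = 0$; I must show $[x, z] = 0$. The subtlety is that the graph condition speaks only about distinct projective points, whereas the CT axiom ranges over all nonzero vectors, so I would split into cases according to which of $[x], [y], [z]$ coincide. If $[x] = [z]$ then $x, z$ are proportional and $[x, z] = 0$ automatically; if $[x] = [y]$ (or $[y] = [z]$) then $x$ is a scalar multiple of $y$ and $[x, z] = 0$ follows by bilinearity from $[y, z] = 0$; and if all three are distinct, the transitivity of non-adjacency in the complete multipartite graph gives $[x] \not\sim [z]$, i.e.\ $[x, z] = 0$. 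In every case $[x, z] = 0$, so $L$ is CT.

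The main obstacle is conceptual rather than computational: correctly matching the CT axiom, quantified over all nonzero vectors (including proportional ones), with the graph statement, quantified over distinct projective points. The coincidence cases above are exactly where this mismatch is resolved, and they are the only delicate point; using $Z(L) = 0$ to identify $L \setminus \{0\}$ with the set of representatives of $P(L/Z(L))$ keeps the translation clean throughout.
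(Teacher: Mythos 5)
Your argument is correct, but it reaches the conclusion by a different route than the paper. You factor everything through a general graph-theoretic lemma: a graph is complete multipartite precisely when ``equal or non-adjacent'' is an equivalence relation on the vertices (i.e.\ non-adjacency is transitive on distinct vertices), and then you observe that, since $Z(L)=0$ identifies vertices with nonzero vectors up to scalars, this transitivity is exactly the CT axiom, modulo the coincidence cases $[x]=[y]$, $[y]=[z]$, $[x]=[z]$, which you settle by bilinearity. The paper instead constructs the partite sets concretely: assuming CT, it checks that $y \in C_L(x)$ defines an equivalence relation on $L\setminus\{0\}$, so the projective centralizers $P(C_L(x))$ partition $V(\Gamma_L)$; each is independent because centralizers in a CT algebra are abelian, and adjacency across distinct classes is verified directly; for the converse it shows that the partite set containing $[x]$ equals $P(C_L(x))$ and reads CT off from that. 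The two routes are logically close---your lemma abstracts what the paper verifies by hand---but they buy different things. Yours is shorter and more modular, and it makes explicit the proportional-representative cases that the paper's converse passes over silently. The paper's construction yields the extra structural fact that the partite sets are precisely the projective centralizers, which is the form of the result that gets reused for the AC analogue, Theorem \ref{ACcomp}, whose statement itself requires the partite sets to be of the form $P(C_L(x)/Z(L))$; with your approach that identification would still need a short additional argument.
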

\begin{proof}
    Let $L$ be a Lie algebra with $Z(L) = 0$. Assume that $L$ is CT. We can define an equivalence relation on $L \setminus \{ 0 \}$ by 
\[
x R y   \iff y \in C_L(x)
\]
for all $x, y \in L \setminus \{ 0 \}$. Since the equivalence class of $x \in L \setminus \{ 0 \}$ is $C_L(x) \setminus \{0\}$, all sets $C_L(x) \setminus \{ 0 \}, x \in L \setminus \{ 0 \}$ form a partition of $L \setminus \{0 \}$. So, all projective spaces $P(C_L(x)), x \in L \setminus Z(L)$, form a partition of $V(\Gamma_L)$. Since each $C_L(x)$ is abelian, $P(C_L(x))$ is a partite set. 
Now, let $[x]$ and $[y]$ be two vertices in $\Gamma_L$ of two different partite sets. Then the first set is $P(C_L(x))$ while the second is $P(C_L(y))$ and we also have
$[x, y] \neq 0$. Let $[a] \in P(C_L(x))$ and $b \in P(C_L(y))$. Then $[a, b] \neq 0$. Indeed, if $[a, b] = 0$, then $[a, y] = 0$ and so $[x, y] = 0$, a contradiction. Thus, $[a] \sim [b]$ and hence $\Gamma_L$ is a complete multipartite graph.

Now we prove the converse. Suppose that $\Gamma_L$ is a complete multipartite graph. For any partite set $P$ and $[x] \in P$, we have $P = P(C_L(x))$. To see this, let $[a] \in P(C_L(x))$, then $[a, x] = 0$, that is, there is no edge between $[a]$ and $[x]$. Since the graph is complete partite, $[a]$ must be in the same partite set with $[x]$. 
Finally, let $x, y, z \in L \setminus \{ 0 \}$ such that $[x, y] = [y, z] = 0$. Then $[x], [y], [z]$ are in the partite set $P(C_L(y))$. This implies $[x, z] = 0$. Therefore, $L$ is CT.
\end{proof}

The following corollaries are consequences of the previous theorem.

\begin{cor}\label{graphCT}
     Let $L$ be a finite-dimensional Lie algebra with trivial center. If $\Gamma_L \cong \Gamma_{\mathfrak{g}}$ where $\mathfrak{g}$ is a finite-dimensional non-abelian CT Lie algebra, then $L$ is CT and hence, directly indecomposable.
\end{cor}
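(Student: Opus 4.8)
The plan is to route everything through Theorem~\ref{CTcomp}, which characterizes the CT property (for centerless Lie algebras) via complete multipartiteness of the non-commuting graph. First I would pin down the center of $\mathfrak{g}$ from its hypotheses: since $\mathfrak{g}$ is non-abelian and CT, and since a CT Lie algebra with nontrivial center must be abelian, we get $Z(\mathfrak{g}) = 0$. Theorem~\ref{CTcomp} then applies directly to $\mathfrak{g}$ and shows that $\Gamma_{\mathfrak{g}}$ is a complete multipartite graph.

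Next I would transport this property across the given isomorphism. Being complete multipartite is purely graph-theoretic --- equivalently, non-adjacency (together with equality) is an equivalence relation on the vertex set --- so it is invariant under any graph isomorphism. Hence $\Gamma_L \cong \Gamma_{\mathfrak{g}}$ forces $\Gamma_L$ to be complete multipartite as well. Since $Z(L) = 0$ by hypothesis, the converse direction of Theorem~\ref{CTcomp} now yields that $L$ is CT.

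For direct indecomposability I would argue by contradiction. Suppose $L = L_1 \oplus L_2$ is a direct sum of two nonzero ideals, so that $[L_1, L_2] = 0$. Recall that $L$ is CT precisely when every nonzero element has an abelian centralizer. Choosing any $0 \neq u \in L_1$, we have $L_2 \subseteq C_L(u)$, and since $C_L(u)$ is abelian, $L_2$ is abelian; symmetrically $L_1$ is abelian. But then $L = L_1 \oplus L_2$ would be abelian, contradicting the non-abelianness of $L$ (which is forced because $\Gamma_L \cong \Gamma_{\mathfrak{g}}$ is non-empty and carries an edge). Therefore no such decomposition exists, and $L$ is directly indecomposable.

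I expect the only content beyond invoking Theorem~\ref{CTcomp} to lie in the indecomposability step, and even there the crux is the simple observation that an ideal complement sits inside a single centralizer, which CT forces to be abelian. The one point to state carefully is the graph-isomorphism invariance of complete multipartiteness, since that is precisely what legitimately carries the CT conclusion from $\mathfrak{g}$ back to $L$.
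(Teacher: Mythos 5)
Your argument for the CT part is exactly the paper's: note $Z(\mathfrak{g})=0$ (a non-abelian CT algebra must be centerless), apply Theorem~\ref{CTcomp} to $\mathfrak{g}$, transport complete multipartiteness across the graph isomorphism, and apply the converse direction of Theorem~\ref{CTcomp} to the centerless $L$. Where you genuinely diverge is the final claim: the paper simply cites \cite[Theorem 2.1]{KM10} for ``CT implies directly indecomposable,'' whereas you prove it from scratch. Your argument is correct: if $L = L_1 \oplus L_2$ with both ideals nonzero, then for $0 \neq u \in L_1$ we have $L_2 \subseteq C_L(u)$, which is abelian by the CT property, so $L_2$ is abelian, and symmetrically so is $L_1$; since $[L_1, L_2] = 0$, this makes $L$ abelian, contradicting the non-abelianness of $L$ (which you correctly extract from the existence of an edge in $\Gamma_L \cong \Gamma_{\mathfrak{g}}$ --- a worthwhile precaution, since the corollary's hypothesis on $L$ does not state non-abelianness explicitly). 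Your version buys self-containedness at essentially no cost in length, and it also makes visible exactly which consequence of CT is used (abelian centralizers swallowing ideal complements); the paper's version buys brevity by outsourcing to the literature. Both are sound.
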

\begin{proof}
    By Theorem \ref{CTcomp}, $\Gamma_L$ is a complete partite graph, and thus, $L$ is CT. The final result is due to \cite[Theorem 2.1]{KM10}.
\end{proof}

From Example 4.4 of \cite{KM10}, we know that every free Lie algebra is CT. Therefore, we have the following corollary.

\begin{cor}
    If $L$ is a free Lie algebra with trivial center, then $\Gamma_L$ is a complete partite graph. 
\end{cor}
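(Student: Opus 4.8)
The plan is to combine two facts already established in the excerpt: first, that every free Lie algebra is CT (quoted from \cite[Example 4.4]{KM10}), and second, the characterization in Theorem \ref{CTcomp} that a Lie algebra with trivial center is CT if and only if its non-commuting graph is complete multipartite. So the entire argument reduces to verifying that the two hypotheses of Theorem \ref{CTcomp} are met by a free Lie algebra $L$ with trivial center.

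First I would observe that the statement explicitly assumes $Z(L) = 0$, which is precisely the standing hypothesis $Z(L) = 0$ required to invoke Theorem \ref{CTcomp}. Next I would invoke the cited fact that $L$, being free, is CT. At this point both the structural hypothesis (triviality of the center) and the CT property hold, so Theorem \ref{CTcomp} applies directly and yields that $\Gamma_L$ is a complete multipartite graph. Written out, the proof is essentially one line:

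\begin{proof}
    Since $L$ is a free Lie algebra, it is CT by \cite[Example 4.4]{KM10}. As $Z(L) = 0$ by hypothesis, Theorem \ref{CTcomp} applies and shows that $\Gamma_L$ is a complete multipartite graph.
\end{proof}

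The only point requiring a moment's care is a consistency check on the hypotheses. A free Lie algebra on more than one generator is non-abelian, so $\Gamma_L$ is a genuine (non-degenerate) graph and the invocation of Theorem \ref{CTcomp} is not vacuous; moreover the general remark in Subsection \ref{CT} that a CT Lie algebra with nontrivial center must be abelian is compatible with, and in fact motivates, the explicit assumption $Z(L) = 0$ in the corollary. I do not anticipate any real obstacle here: the result is a formal consequence of the two quoted inputs, and the main value of the corollary is illustrative, showing that the complete-multipartite characterization of Theorem \ref{CTcomp} has a concrete and natural family of examples. The only thing I would double-check is that the reference genuinely establishes the CT property for \emph{all} free Lie algebras (over the relevant fields) rather than in a restricted setting, since the corollary is stated without qualification.
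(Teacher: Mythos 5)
Your proof is correct and is exactly the argument the paper intends: the corollary is stated immediately after the remark that every free Lie algebra is CT (citing Example 4.4 of \cite{KM10}), and it follows by applying Theorem \ref{CTcomp} with the hypothesis $Z(L)=0$. Nothing more is needed.
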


When $\mathbb{K}$ is the field of complex numbers $\mathbb{C}$, we have the following result.

\begin{cor}\label{CTsl2}
    Let $L$ be a finite-dimensional semisimple Lie algebra over $\mathbb{C}$. If $\Gamma_L \cong \Gamma_{\mathfrak{sl}_2(\mathbb{C})}$, then $L \cong \mathfrak{sl}_2(\mathbb{C})$.
\end{cor}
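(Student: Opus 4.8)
The plan is to detect the rank of $L$ through a graph-isomorphism invariant---the domination number $\gamma$---and then appeal to the classification of rank-one semisimple Lie algebras. The guiding observation is that $\gamma(\Gamma_L)$ depends only on the abstract graph $\Gamma_L$, so $\Gamma_L \cong \Gamma_{\mathfrak{sl}_2(\mathbb{C})}$ forces $\gamma(\Gamma_L) = \gamma(\Gamma_{\mathfrak{sl}_2(\mathbb{C})})$. Everything then reduces to computing this one number for $\mathfrak{sl}_2(\mathbb{C})$ and reading off what it says about $L$.

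First I would pin down $\gamma(\Gamma_{\mathfrak{sl}_2(\mathbb{C})})$. Being semisimple, $\mathfrak{sl}_2(\mathbb{C})$ has trivial center, and a direct bracket computation in the standard basis $\{x,y,h\}$ shows that every nonzero element has a one-dimensional centralizer, i.e. $C_{\mathfrak{sl}_2(\mathbb{C})}(v) = \spann\{v\}$ for all $v \neq 0$ (this is exactly the rank-one phenomenon, and is recorded in the remark computing $\gamma$ for $\mathfrak{sl}_2(\mathbb{C})$). By Theorem \ref{domi1}, $C_L(x) = \spann\{x\}\oplus Z(L)$ is precisely the condition for a single vertex $\{[x]\}$ to dominate, so every vertex is a dominating set and $\gamma(\Gamma_{\mathfrak{sl}_2(\mathbb{C})}) = 1$. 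Invariance of $\gamma$ under graph isomorphism then yields $\gamma(\Gamma_L) = 1$.

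Next I would extract the rank. Write $t$ for the rank of the semisimple Lie algebra $L$. The corollary asserting that $\gamma(\Gamma_L) = 2$ whenever $t > 1$ immediately excludes $t > 1$, since that would contradict $\gamma(\Gamma_L) = 1$; hence $t = 1$. Finally, because rank is additive over simple summands, a semisimple Lie algebra over $\mathbb{C}$ of rank one must be simple of type $A_1$, which is $\mathfrak{sl}_2(\mathbb{C})$. Therefore $L \cong \mathfrak{sl}_2(\mathbb{C})$.

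I expect no genuine obstacle, as essentially all the work is packaged in Section \ref{section2}; the only content to supply is the value $\gamma(\Gamma_{\mathfrak{sl}_2(\mathbb{C})}) = 1$, resting on the one-dimensionality of every centralizer in $\mathfrak{sl}_2(\mathbb{C})$, together with the standard fact that $A_1$ is the unique rank-one simple type over $\mathbb{C}$. An equivalent route bypasses $\gamma$ entirely: the same centralizer computation shows $\Gamma_{\mathfrak{sl}_2(\mathbb{C})}$ is a complete graph, hence so is $\Gamma_L$, and by the completeness criterion $C_L(x) = \spann\{x\}$ for every $x \neq 0$; since a regular semisimple element has centralizer equal to a Cartan subalgebra of dimension $t$, this again forces $t = 1$. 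The one point requiring care is that $\Gamma_{\mathfrak{sl}_2(\mathbb{C})}$ is infinite, so one must use the cardinality-based definition of $\gamma$; but Theorem \ref{domi1} is stated in exactly this generality, so the argument goes through unchanged.
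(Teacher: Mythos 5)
Your proof is correct, and it takes a genuinely different route from the paper's. The paper's own argument never mentions rank or domination: it observes that $\mathfrak{sl}_2(\mathbb{C})$ is CT (centralizers of nonzero elements are abelian), uses Theorem \ref{CTcomp} to transfer the property of being a complete multipartite graph across the isomorphism and conclude that $L$ is CT, and then invokes the external classification of CT complex Lie algebras, \cite[Theorem 3.1]{KM10}, to get $L \cong \mathfrak{sl}_2(\mathbb{C})$. You instead transfer the domination number: since every nonzero $v \in \mathfrak{sl}_2(\mathbb{C})$ is regular (semisimple or nilpotent) and hence satisfies $C_{\mathfrak{sl}_2(\mathbb{C})}(v) = \spann\{v\}$, Theorem \ref{domi1} gives $\gamma(\Gamma_{\mathfrak{sl}_2(\mathbb{C})}) = 1$, so $\gamma(\Gamma_L) = 1$; the unlabeled corollary in Section \ref{section2} (that a complex semisimple $L$ of rank $t > 1$ has $\gamma(\Gamma_L) = 2$, whose proof only needs $\dim C_L(x) \geq t$ and Theorem \ref{domi1}) then forces $\mathrm{rank}(L) = 1$, and Cartan--Killing classification finishes. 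All results you cite appear before Corollary \ref{CTsl2}, so there is no circularity, and your handling of the infinite-graph issue is right since Theorem \ref{domi1} requires no finiteness. What each approach buys: yours is more self-contained, replacing the Klep--Moravec theorem by the paper's own domination results plus the standard fact that $A_1$ is the unique rank-one simple type over $\mathbb{C}$ (and your completeness variant --- complete graph $\Rightarrow$ all centralizers one-dimensional $\Rightarrow$ the Cartan subalgebra is one-dimensional --- is arguably the shortest correct proof). The paper's CT route, by contrast, is the one that scales uniformly to the neighboring corollaries (the solvable/simple dichotomy over $\mathbb{C}$ and the real-form statements via Gorbatsevich's lemmas), where pure rank detection would need additional care --- e.g.\ $\mathfrak{sl}_2(\mathbb{C})$ viewed as a real Lie algebra has two-dimensional centralizers, so its graph is not complete and the invariant $\gamma$ alone no longer separates the cases as cleanly.
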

\begin{proof}
Note that $\mathfrak{sl}_2(\mathbb{C})$ is CT because the centralizer of any nonzero element in $\mathfrak{sl}_2(\mathbb{C})$ has dimension either $1$ or $2$, so it is abelian.

Assume that $\Gamma_L \cong \Gamma_{\mathfrak{sl}_2(\mathbb{C})}$. By Theorem \ref{CTcomp}, $\Gamma_L$ is complete partite, and so it is CT. It follows from \cite[Theorem 3.1]{KM10} that $L$ is isomorphic to $\mathfrak{sl}_2(\mathbb{C})$.
\end{proof}

More generally, Theorem 4.1 of \cite{KM10} asserts that a finite-dimensional CT Lie algebra over $\mathbb{C}$ is either solvable or simple. The following corollaries holds immediately. 

\begin{cor}
    Let $L$ be a finite-dimensional Lie algebra over $\mathbb{C}$. If $L$ is non-solvable with trivial center and $\Gamma_L \cong \Gamma_{\mathfrak{sl}_2(\mathbb{C})}$, then $L \cong \mathfrak{sl}_2(\mathbb{C})$.
\end{cor}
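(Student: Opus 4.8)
The plan is to chain together the characterization of CT algebras via complete multipartite graphs (Theorem \ref{CTcomp}), the structural dichotomy for CT Lie algebras over $\mathbb{C}$ recalled just above the statement, and the already-established semisimple case (Corollary \ref{CTsl2}). The hypotheses $Z(L) = 0$ and non-solvability are precisely what is needed to feed each of these inputs.

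First I would translate the graph-theoretic hypothesis into an algebraic one. As noted in the proof of Corollary \ref{CTsl2}, the algebra $\mathfrak{sl}_2(\mathbb{C})$ is CT, and since it is semisimple it has trivial center; hence by Theorem \ref{CTcomp} the graph $\Gamma_{\mathfrak{sl}_2(\mathbb{C})}$ is complete multipartite. Because being complete multipartite is invariant under graph isomorphism and $\Gamma_L \cong \Gamma_{\mathfrak{sl}_2(\mathbb{C})}$, the graph $\Gamma_L$ is also complete multipartite. Now using the standing hypothesis $Z(L) = 0$, the reverse direction of Theorem \ref{CTcomp} applies and yields that $L$ is CT.

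Next I would exploit non-solvability. By the dichotomy of \cite[Theorem 4.1]{KM10}, every finite-dimensional CT Lie algebra over $\mathbb{C}$ is either solvable or simple. Since $L$ is assumed non-solvable, this forces $L$ to be simple, and in particular semisimple. At this point the hypotheses of Corollary \ref{CTsl2} are all in place: $L$ is a finite-dimensional semisimple Lie algebra over $\mathbb{C}$ with $\Gamma_L \cong \Gamma_{\mathfrak{sl}_2(\mathbb{C})}$. Applying that corollary gives $L \cong \mathfrak{sl}_2(\mathbb{C})$, completing the argument.

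There is no genuine computational obstacle here; the proof is essentially a bookkeeping exercise that routes the hypotheses through three previously available results. The one conceptual point worth emphasizing is that the combination ``non-solvable $+$ CT'' collapses to ``simple'' via the \cite{KM10} dichotomy, which is exactly what reduces the present statement to the semisimple situation already handled in Corollary \ref{CTsl2}. If anything required care, it would be verifying that the CT property survives the graph isomorphism, but this is immediate from Theorem \ref{CTcomp} together with the invariance of the complete-multipartite property, given that both $L$ and $\mathfrak{sl}_2(\mathbb{C})$ have trivial center.
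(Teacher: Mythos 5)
Your argument is correct and follows the same route as the paper: transfer the complete-multipartite property through the graph isomorphism, use Theorem \ref{CTcomp} with $Z(L)=0$ to get that $L$ is CT, invoke the solvable-or-simple dichotomy of \cite[Theorem 4.1]{KM10} to conclude simplicity from non-solvability, and finish with Corollary \ref{CTsl2}. Your write-up is in fact more explicit than the paper's (which compresses the first step into ``by the similar argument as above''), but the content is identical.
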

\begin{proof}
    Since $\Gamma_L \cong \Gamma_{\mathfrak{sl}_2(\mathbb{C})}$, by the similar argument as above, $\Gamma_L$ is complete partite, and so, it is CT. By \cite[Theorem 4.1]{KM10}, $L$ is simple. The proof is complete by Corollary \ref{CTsl2}.
\end{proof}

\begin{cor}
    Let $L$ be a finite-dimensional Lie algebra over $\mathbb{C}$. If $L$ is non-semisimple with trivial center and $\Gamma_L \cong \Gamma_{\mathfrak{g}}$ where $\mathfrak{g}$ is a non-abelian CT Lie algebra over $\mathbb{C}$, then $L$ is solvable.
\end{cor}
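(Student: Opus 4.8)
The plan is to follow exactly the template set by the preceding corollaries, reducing everything to Theorem \ref{CTcomp} together with the solvable/simple dichotomy for complex CT Lie algebras. The first step is to pin down the center of $\mathfrak{g}$. Since $\mathfrak{g}$ is assumed to be CT and non-abelian, the remark opening this subsection (a CT Lie algebra with nontrivial center is abelian) forces $Z(\mathfrak{g}) = 0$. With trivial center in hand, Theorem \ref{CTcomp} applies directly to $\mathfrak{g}$ and shows that $\Gamma_{\mathfrak{g}}$ is a complete multipartite graph.

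Next I would transport this structure to $L$. Being complete multipartite depends only on the isomorphism type of a graph, so the hypothesis $\Gamma_L \cong \Gamma_{\mathfrak{g}}$ immediately yields that $\Gamma_L$ is complete multipartite as well. At this point I would invoke the converse direction of Theorem \ref{CTcomp}, crucially using the standing assumption that $L$ has trivial center: since $Z(L) = 0$ and $\Gamma_L$ is complete multipartite, $L$ is CT. This is precisely the same deduction carried out in Corollary \ref{CTsl2} and the corollary immediately above, so no new argument is needed here.

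The final step is to apply the dichotomy \cite[Theorem 4.1]{KM10}: every finite-dimensional CT Lie algebra over $\mathbb{C}$ is either solvable or simple. A simple Lie algebra is in particular semisimple, so the hypothesis that $L$ is non-semisimple rules out the simple alternative, leaving only that $L$ is solvable, which is the desired conclusion. There is no substantial obstacle in this argument, as all of the content has been front-loaded into Theorem \ref{CTcomp} and the cited dichotomy; the only points requiring a moment's care are confirming that the complete-multipartite property genuinely survives the graph isomorphism (it does, being isomorphism-invariant) and that ``non-semisimple'' correctly excludes ``simple'' (it does, since every simple Lie algebra is semisimple).
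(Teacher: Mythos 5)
Your proposal is correct and follows essentially the same route as the paper: apply Theorem \ref{CTcomp} to $\mathfrak{g}$ (using that a non-abelian CT Lie algebra has trivial center) to get that $\Gamma_{\mathfrak{g}}$, hence $\Gamma_L$, is complete multipartite, deduce via the converse of Theorem \ref{CTcomp} and $Z(L)=0$ that $L$ is CT, and then use the solvable/simple dichotomy of \cite[Theorem 4.1]{KM10} together with the fact that simple implies semisimple. If anything, you are more careful than the paper, which leaves the verification of $Z(\mathfrak{g})=0$ and the step from ``non-semisimple'' to ``non-simple'' implicit.
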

\begin{proof}
    Again, we have $\Gamma_L$ is a complete partite graph. Then by Theorem \ref{CTcomp}, $L$ is CT. Since $L$ is non-simple, we obtain that $L$ is solvable, by \cite[Theorem 4.1]{KM10}.
\end{proof}

Recall that a real compact Lie algebra is a Lie algebra of some compact Lie group. 
The results appeared in \cite{Gor17}, in particular, Lemma 1 and Lemma 2,  provide us some information about Lie algebras over $\mathbb{R}$ and we have:

\begin{cor}
    Let $L$ be a real semisimple Lie algebra with trivial center. Then 
    \begin{enumerate}
        \item if $L$ is compact and $\Gamma_L \cong \Gamma_{\mathfrak{su}(2)}$, then $L \cong \mathfrak{su}(2)$.
        \item if $L$ is noncompact and $\Gamma_L$ is isomorphic to $\Gamma_{\mathfrak{sl}_2(\mathbb{R})}$ or $\Gamma_{\mathfrak{sl}_2(\mathbb{C})}$, then $L \cong \mathfrak{sl}_2(\mathbb{R})$ or $L \cong \mathfrak{sl}_2(\mathbb{C})$.
    \end{enumerate}
\end{cor}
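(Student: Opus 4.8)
The plan is to run exactly the machine developed for Corollary~\ref{CTsl2}: transfer the complete-multipartite property across the graph isomorphism to conclude that $L$ is CT, and then invoke the classification of real semisimple CT Lie algebras recorded in \cite{Gor17}. First I would check that each model algebra $\mathfrak{su}(2)$, $\mathfrak{sl}_2(\mathbb{R})$ and $\mathfrak{sl}_2(\mathbb{C})$ (the last regarded as a real Lie algebra) is CT. In each case the centralizer of a nonzero element has dimension $1$ or $2$ and is therefore abelian, precisely as argued for $\mathfrak{sl}_2(\mathbb{C})$ in the proof of Corollary~\ref{CTsl2}, so these algebras are CT with trivial center.

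Next I would apply Theorem~\ref{CTcomp} in both directions. Since each model algebra $\mathfrak{g}$ is CT with $Z(\mathfrak{g}) = 0$, Theorem~\ref{CTcomp} makes $\Gamma_{\mathfrak{g}}$ a complete multipartite graph. The hypothesis $\Gamma_L \cong \Gamma_{\mathfrak{g}}$ then forces $\Gamma_L$ to be complete multipartite as well; and because $L$ has trivial center by assumption, the converse half of Theorem~\ref{CTcomp} yields that $L$ is CT. At this point both (1) and (2) have been reduced to the situation where $L$ is a real semisimple CT Lie algebra which is respectively compact or noncompact.

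Finally I would appeal to the structural results of \cite{Gor17}. Lemma~1 and Lemma~2 there classify real semisimple CT Lie algebras: the compact ones are isomorphic to $\mathfrak{su}(2)$, while the noncompact ones are isomorphic to $\mathfrak{sl}_2(\mathbb{R})$ or to $\mathfrak{sl}_2(\mathbb{C})$. Applying the compact case gives $L \cong \mathfrak{su}(2)$ in part (1), and applying the noncompact case gives $L \cong \mathfrak{sl}_2(\mathbb{R})$ or $L \cong \mathfrak{sl}_2(\mathbb{C})$ in part (2). Note that the disjunctive conclusion in (2) is exactly what the classification supplies, so no further step matching a particular graph to a particular algebra is required.

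The only genuinely external ingredient, and hence the main obstacle, is the classification of real semisimple CT Lie algebras through \cite{Gor17}; the argument hinges on citing Lemma~1 and Lemma~2 correctly and verifying that their hypotheses (compact versus noncompact, trivial center) align with ours. Everything else—confirming that the three model algebras are CT and pushing the complete-multipartite property across the graph isomorphism—is routine and mirrors Corollary~\ref{CTsl2}.
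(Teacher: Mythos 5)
Your proposal is correct and follows exactly the route the paper takes: the paper's own (very terse) proof says to argue ``similarly to Corollary~\ref{CTsl2}'' --- i.e., transfer the complete-multipartite property via Theorem~\ref{CTcomp} to conclude $L$ is CT --- and then invoke Lemma~1 and Lemma~2 of \cite{Gor17} for the compact/noncompact classification, which is precisely your argument written out in full.
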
 
\begin{proof}
    Applying Lemma 1 and Lemma 2 in \cite{Gor17}, 
    the proof can be done similarly to Corollary \ref{CTsl2}.
\end{proof}

\subsection{AC Lie algebras}

The following class of Lie algebras which is a generalization of a CT Lie algebra can be considered. A Lie algebra $L$ is said to be \textit{AC} if for all $x, y, z \in L \setminus Z(L)$, if 
$[x, y] = [y, z] = 0$, then $[x, z] = 0$. 

\begin{remark}
    If $L$ is an AC non-abelian Lie algebra, then $\gamma(\Gamma_L) \leq 2$. Indeed, since $L$ is non-abelian, there exist two adjacent vertices $[x], [y] \in \Gamma_L$. If $[z] \in \Gamma_L$ that $[z]$ is not adjacent to neither $[x]$ nor $[y]$, then $[x, z] = [y, z] = 0$ and so, $[x, y] =0$, a contradiction. 
\end{remark}

Note that any CT Lie algebra is an AC Lie algebra. However, the converse is not true in general unless the center is trivial. We give an example of an AC Lie algebra which is not a CT Lie algebra as follows.

\begin{ex}
    Let $L$ be a $3$-dimensional Heisenberg Lie algebra which the canonical commutation relations: $[x, y] = z$,
    where $x, y, z$ are the generators for $L$. It is known that $L$ is a nilpotent Lie algebra with $Z(L) = \spann \{z\}$. So, it is not a CT Lie algebra. Next, we show that $L$ is AC.

    Let $u, v, w \in L \setminus Z(L)$ such that $[u, v] = [v, w] = 0$. We write each one as a linear combination of all basis elements:
    \begin{align*}
        u &= a_1 x + b_1 y + c_1 z \\
        v &= a_2 x + b_2 y + c_2 z \\
        w &= a_3 x + b_3 y + c_3 z
    \end{align*}
where $a_i, b_i, c_i \in \mathbb{K}$ with $(a_i, b_i) \neq (0, 0)$ for all $i = 1, 2, 3$. Then the condition $[u, v] = [v, w] = 0$ implies $a_1 b_2 = a_2 b_1$ and $a_2 b_3 = a_3 b_2$. Without loss of generality, we may assume that $a_2 \neq 0$. Then $b_1 = \dfrac{a_1 b_2}{a_2}$ and $b_3 = \dfrac{a_3b_2}{a_2}$. Therefore, the bracket $[a, c] = (a_1 b_3 - a_3 b_1)z$ equals zero, and implying that $L$ is an AC Lie algebra.
\end{ex}

In the following lemma, we provide some equivalent definitions of AC Lie algebras.

\begin{lemma}\label{eqdefac}
    Let $L$ be a finite-dimensional Lie algebra over $\mathbb{K}$. Then the following statements are equivalent.
    \begin{enumerate}
        \item $L$ is an AC Lie algebra.
        \item For all $x \in L \setminus Z(L)$, $C_L(x)$ is abelian.
        \item For all $x, y \in L \setminus Z(L)$, if $[x, y] = 0$, then $C_L(x) = C_L(y)$.
        \item If $I$ and $J$ are subalgebras of $L$ and $Z(L) \subsetneq C_L(I) \subseteq C_L(J) \subsetneq L$, then $C_L(I) = C_L(J)$.
    \end{enumerate}
\end{lemma}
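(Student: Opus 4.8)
The plan is to establish the equivalence by proving the cycle $(1)\Rightarrow(2)\Rightarrow(3)\Rightarrow(4)\Rightarrow(1)$. The first two links are essentially bookkeeping with the definition of AC. For $(1)\Rightarrow(2)$ I would fix a non-central $x$ and take $u,v\in C_L(x)$; if either lies in $Z(L)$ the bracket $[u,v]$ vanishes trivially, and otherwise $u,x,v$ are all non-central with $[u,x]=[x,v]=0$, so the AC hypothesis forces $[u,v]=0$, i.e.\ $C_L(x)$ is abelian. For $(2)\Rightarrow(3)$, given non-central $x,y$ with $[x,y]=0$ I would take any $w\in C_L(x)$; since $y,w\in C_L(x)$ and $C_L(x)$ is abelian we get $[w,y]=0$, so $w\in C_L(y)$, giving $C_L(x)\subseteq C_L(y)$, and the reverse inclusion follows by interchanging $x$ and $y$.

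The crux is $(3)\Rightarrow(4)$. The guiding idea is that, under AC, the centralizer of a subalgebra collapses to the centralizer of a single well-chosen element. First I would record that $(3)$ alone forces every $C_L(x)$ (with $x$ non-central) to be abelian, by the same one-line argument as in $(2)\Rightarrow(3)$. Now suppose $Z(L)\subsetneq C_L(I)\subseteq C_L(J)\subsetneq L$. Since $C_L(I)\subseteq C_L(J)\subsetneq L$, neither $I$ nor $J$ lies in $Z(L)$, so picking a non-central element of each and noting it sits inside an abelian centralizer shows $C_L(I)$ and $C_L(J)$ are both abelian. Choosing $x\in C_L(I)\setminus Z(L)$, I claim $C_L(I)=C_L(x)$: the inclusion $C_L(I)\subseteq C_L(x)$ holds because $C_L(I)$ is abelian and contains $x$, while $C_L(x)\subseteq C_L(I)$ holds because $I\subseteq C_L(x)$ and, $C_L(x)$ being abelian, every element of $C_L(x)$ commutes with all of $I$. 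Since $x\in C_L(I)\subseteq C_L(J)$ is still non-central, the identical argument applied to $J$ gives $C_L(J)=C_L(x)$, whence $C_L(I)=C_L(x)=C_L(J)$.

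For $(4)\Rightarrow(1)$ the strategy is to recover the element-wise condition $(3)$ from the subalgebra condition $(4)$ and then conclude. Given non-central $x,y$ with $[x,y]=0$, the set $A=\spann\{x,y\}$ is an abelian subalgebra with $C_L(A)=C_L(x)\cap C_L(y)$; taking $I=A$ and $J=\spann\{x\}$, the chain $Z(L)\subsetneq C_L(A)\subseteq C_L(x)\subsetneq L$ holds (the left strict inclusion since $x\in C_L(A)\setminus Z(L)$, the right since $x\notin Z(L)$), so $(4)$ yields $C_L(A)=C_L(x)$, that is $C_L(x)\subseteq C_L(y)$; symmetry gives $C_L(x)=C_L(y)$. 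Applying this to both commuting pairs in a configuration $[x,y]=[y,z]=0$ gives $C_L(x)=C_L(y)=C_L(z)$, and since $z\in C_L(y)=C_L(x)$ we conclude $[x,z]=0$, which is exactly $(1)$.

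The step I expect to be the only genuine obstacle is the reduction $C_L(I)=C_L(x)$ inside $(3)\Rightarrow(4)$: statement $(4)$ is really about subalgebras, and the insight that abelian centralizers let one replace an arbitrary $C_L(I)$ by the centralizer of a single non-central member of it, together with reusing the \emph{same} $x$ to handle both $I$ and $J$, is what makes the otherwise formal inclusions close up. The remaining verifications are routine once the correct auxiliary subalgebras $\spann\{x,y\}$ and $\spann\{x\}$ are selected in $(4)\Rightarrow(1)$.
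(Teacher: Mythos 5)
Your proof is correct, and while the links $(1)\Rightarrow(2)\Rightarrow(3)$ coincide with the paper's (down to the same one-line arguments; you additionally handle the trivial case of central elements, which the paper glosses over), your treatment of statement (4) takes a genuinely different route. The paper proves $(1)\Leftrightarrow(2)\Leftrightarrow(3)$ as a short cycle and then attaches (4) separately: it proves $(1)\Rightarrow(4)$ by contradiction, choosing $y\in C_L(J)\setminus C_L(I)$ and applying the element-wise AC condition twice, and $(4)\Rightarrow(2)$ again by contradiction, arguing that a non-commuting pair $y,z\in C_L(x)$ would produce a strict chain $Z(L)\subsetneq C_L(\spann\{x,y\})\subsetneq C_L(\spann\{x\})$ violating (4). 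You instead close a single four-step cycle: your $(3)\Rightarrow(4)$ is a direct argument resting on the observation that once centralizers of non-central elements are abelian, $C_L(I)$ collapses to $C_L(x)$ for any non-central $x\in C_L(I)$ (both inclusions following from abelianness together with $I\subseteq C_L(x)$), and reusing the \emph{same} $x$ for $J$ forces $C_L(I)=C_L(x)=C_L(J)$; your $(4)\Rightarrow(1)$ then recovers the stronger statement (3) directly from (4) via the auxiliary subalgebras $\spann\{x,y\}$ and $\spann\{x\}$, rather than by contradiction. The two approaches use the same underlying objects, but your ``collapse'' lemma makes the content of (4) more transparent, and your direct version incidentally sidesteps a slip in the paper's own text, which asserts $z\in C_L(\spann\{x,y\})$ when what is meant is that $z$ witnesses the strictness of the inclusion $C_L(\spann\{x,y\})\subsetneq C_L(\spann\{x\})$; you also verify explicitly the strictness $C_L(x)\subsetneq L$ needed to invoke (4), which the paper leaves implicit.
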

\begin{proof}
    The proof is straightforward as follows.

    To show that (1) implies (2), let $x \in L \setminus Z(L)$ and $u, v \in C_L(x)$. Then $[u, x] = [x, v] = 0$ and hence, by (1), $[u, v] = 0$. Now, we assume (2) and let $x, y \in L \setminus Z(L)$ such that $[x, y] = 0$. For any $u \in C_L(x)$, we have $[u, x] = 0$ So, both $u$ and $y$ are in $C_L(x)$. By (2), $C_L(x)$ is abelian, so then $[u, y] = 0$ and $u \in C_L(y)$. Next, we assume (3) and let $x, y, z \in L \setminus Z(L)$ such that $[x, y] = [y, z] = 0$ Then we have $C_L(x) = C_L(y) = C_L(z)$ and hence $[x, z] = 0$, we obtain (1) $\iff$ (2) $\iff$ (3).

    Suppose that $L$ is AC. Let $I$ and $J$ be subalgebras of $L$ such that $Z(L) \subsetneq C_L(I) \subseteq C_L(J) \subsetneq L$. Suppose that there exists $y \in C_L(J) \setminus C_L(I)$. Let $u \in I, v \in J \setminus Z(L)$ and $x \in C_L(I) \setminus Z(L)$. Then $[x, u] = [x, v] = 0$. By the assumption, $[u, v] = 0$. Moreover, $[v, y] = 0$ and so $[u, y] = 0$. Since $u$ is arbitrary, $y \in C_L(I)$, a contradiction and (4) is achieved. Finally, assume that (4) holds. Let $x \in L \setminus Z(L)$. We show that $C_L(x)$ is abelian. Let $y, z \in C_L(x)$ such that $[y, z] \neq 0$. Then $z \in C_L(\spann\{x, y\})$, and so $Z(L) \subsetneq C_L(\spann\{x, y\}) \subsetneq C_L(\spann\{x\}) \subseteq L$, which contradicts (4). This completes the proof.
\end{proof}

\begin{prop}\label{glnAC}
    The Lie algebra $\mathfrak{gl}_2(\mathbb{C})$ is AC whereas $\mathfrak{gl}_n(\mathbb{C})$ is not AC for all $n \neq 2$.
\end{prop}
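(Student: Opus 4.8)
The plan is to reduce everything to the centralizer criterion of Lemma~\ref{eqdefac}, namely that $L$ is AC if and only if $C_L(x)$ is abelian for every $x \in L \setminus Z(L)$. Since $Z(\mathfrak{gl}_n(\mathbb{C}))$ consists exactly of the scalar matrices $\mathbb{C}I$, a non-central element is precisely a non-scalar matrix, so the whole proposition becomes the question, for each $n$, of whether every non-scalar matrix has an abelian centralizer in $M_n(\mathbb{C})$.

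For $n = 2$ I would show that the centralizer of any non-scalar $x$ equals $\spann\{I, x\}$, which is commutative. The key observation is that a non-scalar $2 \times 2$ matrix is cyclic: there is a vector $v$ with $\{v, xv\}$ a basis of $\mathbb{C}^2$. If $y$ commutes with $x$, write $yv = a v + b\, xv$; then $y(xv) = x(yv) = a\, xv + b\, x^2 v = (aI + bx)(xv)$, so $y$ and $aI + bx$ agree on the basis $\{v, xv\}$, forcing $y = aI + bx$. Hence $C_{\mathfrak{gl}_2(\mathbb{C})}(x) = \spann\{I,x\}$ is abelian for every non-scalar $x$, and $\mathfrak{gl}_2(\mathbb{C})$ is AC by Lemma~\ref{eqdefac}. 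This cyclic-vector argument is what lets me treat the diagonalizable and single-Jordan-block types in one stroke rather than case by case.

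For $n \geq 3$ it suffices, again by Lemma~\ref{eqdefac}, to exhibit one non-central element with non-abelian centralizer. I would take $x = E_{11}$, the elementary matrix with a single $1$ in the $(1,1)$ entry; it is non-scalar, hence non-central. Since $E_{11}$ is the projection onto $\mathbb{C}e_1$ along $\spann\{e_2,\dots,e_n\}$, a matrix commutes with it exactly when it preserves both of these subspaces, so $C_{\mathfrak{gl}_n(\mathbb{C})}(E_{11})$ is the block subalgebra $\mathbb{C} \oplus M_{n-1}(\mathbb{C})$. As $n - 1 \geq 2$, the block $M_{n-1}(\mathbb{C})$ is non-abelian, so $C_L(E_{11})$ is non-abelian and $\mathfrak{gl}_n(\mathbb{C})$ is not AC. Concretely, $E_{11}$ commutes with both $E_{22}$ and $E_{23}$, yet $[E_{22}, E_{23}] = E_{23} \neq 0$, which is an explicit AC-violating triple. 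For completeness I would note that $\mathfrak{gl}_1(\mathbb{C}) = \mathbb{C}$ is abelian and thus lies outside the paper's standing non-abelian hypothesis.

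Because the proof splits cleanly into the identity-plus-$x$ description of $2\times 2$ centralizers and the block description in higher rank, no step is genuinely delicate; the only point needing care is the uniform handling of the $n=2$ case across all conjugacy types, which the cyclic-vector computation above resolves.
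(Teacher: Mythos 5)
Your proof is correct, but it takes a genuinely different route from the paper's. The paper argues in three lines: it writes $\mathfrak{gl}_n(\mathbb{C}) \cong \mathfrak{sl}_n(\mathbb{C}) \oplus \mathbb{C}I_n$, notes that if $\mathfrak{gl}_n(\mathbb{C})$ were AC then $\mathfrak{sl}_n(\mathbb{C})$ would be CT, and then invokes \cite[Theorem 3.1]{KM10}, by which $\mathfrak{sl}_2(\mathbb{C})$ is the only CT algebra among the $\mathfrak{sl}_n(\mathbb{C})$. You instead work entirely inside $\mathfrak{gl}_n(\mathbb{C})$ via Lemma \ref{eqdefac}: the cyclic-vector computation showing $C_{\mathfrak{gl}_2(\mathbb{C})}(x) = \spann\{I, x\}$ for every non-scalar $x$, and the block centralizer $C_{\mathfrak{gl}_n(\mathbb{C})}(E_{11}) \cong \mathbb{C} \oplus \mathfrak{gl}_{n-1}(\mathbb{C})$, non-abelian once $n \geq 3$, with the explicit AC-violating triple $E_{22}, E_{11}, E_{23}$. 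What each approach buys: the paper's reduction is very short but leans on an external classification, and as written it only spells out the implication needed for the negative half ($n \neq 2$) --- the positive claim that $\mathfrak{gl}_2(\mathbb{C})$ is AC requires the converse transfer from CT of $\mathfrak{sl}_2(\mathbb{C})$, via the decomposition $C_{\mathfrak{gl}_n(\mathbb{C})}(x_0 + \lambda I_n) = C_{\mathfrak{sl}_n(\mathbb{C})}(x_0) \oplus \mathbb{C}I_n$, which the paper leaves implicit. Your version is longer but self-contained, makes both halves explicit, and sidesteps that subtlety entirely; your closing remark that $\mathfrak{gl}_1(\mathbb{C})$ is abelian, hence outside the paper's standing non-abelian hypothesis (and indeed vacuously AC, so the proposition's literal ``for all $n \neq 2$'' needs that hypothesis), is a point of care the paper's proof does not address.
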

\begin{proof}
    Recall that $\mathfrak{gl}_n(\mathbb{C}) \cong \mathfrak{sl}_n(\mathbb{C}) \oplus \lambda I_n$. Note that if $\mathfrak{gl}_n(\mathbb{C})$ is AC, then $\mathfrak{sl}_n(\mathbb{C})$ is CT.
    By \cite[Theorem 3.1]{KM10}, $\mathfrak{sl}_2(\mathbb{C})$ is the only CT Lie algebra among $\mathfrak{sl}_n(\mathbb{C})$. We have the Proposition.
\end{proof}

\begin{thm}\label{ACcomp}
    A finite-dimensional non-abelian Lie algebra $L$ is AC if and only if $\Gamma_L$ is a complete multipartite graph with each partite set is of the form $P(C_L(x)/Z(L))$ for some $x \in L \setminus Z(L)$.
\end{thm}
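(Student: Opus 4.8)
The plan is to mirror the structure of the proof of Theorem \ref{CTcomp}, adapting it to allow a nontrivial center by invoking the equivalent characterizations of the AC condition collected in Lemma \ref{eqdefac}. Throughout I would use the basic observation that adjacency descends to projective classes, so that for vertices $[x], [y]$ one has $[x] \sim [y]$ if and only if $[x,y] \neq 0$, equivalently $[y] \notin P(C_L(x)/Z(L))$; in particular the relation ``$[x,y] = 0$'' is well-defined on $V(\Gamma_L) = P(L/Z(L))$ independently of the chosen representatives.

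For the forward direction, assume $L$ is AC. First I would introduce the relation on $V(\Gamma_L)$ declaring $[x]$ and $[y]$ related whenever $[x,y] = 0$, i.e. $y \in C_L(x)$. Reflexivity and symmetry are immediate, and transitivity is exactly the AC hypothesis applied to representatives $x, y, z \in L \setminus Z(L)$. Hence this is an equivalence relation, and the class of $[x]$ is precisely $P(C_L(x)/Z(L))$, so these classes partition $V(\Gamma_L)$ into sets of the claimed form. Next I would check that each class is independent: by Lemma \ref{eqdefac}(2) the centralizer $C_L(x)$ is abelian, so any two vertices of $P(C_L(x)/Z(L))$ commute and are non-adjacent.

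The step I expect to require the most care is showing the graph is complete between distinct classes, though it is short once AC is in hand. Suppose $[a] \in P(C_L(x)/Z(L))$ and $[b] \in P(C_L(y)/Z(L))$ with $[x,y] \neq 0$, so the two classes are distinct. Arguing by contradiction, if $[a,b] = 0$, then combining $[a,b] = 0$ with $[b,y] = 0$ yields $[a,y] = 0$ by AC, and then combining $[x,a] = 0$ with $[a,y] = 0$ yields $[x,y] = 0$ by AC again, contradicting $[x,y] \neq 0$. Thus $[a] \sim [b]$, and $\Gamma_L$ is a complete multipartite graph whose partite sets have the required form.

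For the converse, assume $\Gamma_L$ is complete multipartite with each partite set of the form $P(C_L(x)/Z(L))$. Given $x, y, z \in L \setminus Z(L)$ with $[x,y] = [y,z] = 0$, the vertices $[x], [y]$ are non-adjacent and hence lie in a common partite set, as do $[y], [z]$; so all three lie in one partite set. Since partite sets are independent, $[x]$ and $[z]$ are non-adjacent, giving $[x,z] = 0$, whence $L$ is AC. The only conceptual point here is that in a complete multipartite graph the partition into partite sets coincides with the equivalence classes of the non-adjacency relation, so lying in a common part is transitive; this transitivity is exactly what forces the commuting relation to be transitive and delivers the AC property.
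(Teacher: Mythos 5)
Your proposal is correct and follows exactly the route the paper intends: the paper's proof of Theorem \ref{ACcomp} simply says it is the proof of Theorem \ref{CTcomp} with $\{0\}$ replaced by $Z(L)$, and your argument is precisely that adaptation (equivalence classes $P(C_L(x)/Z(L))$, independence via Lemma \ref{eqdefac}(2), completeness between classes by applying AC twice, and the converse via transitivity of non-adjacency in a complete multipartite graph).
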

\begin{proof}
    The proof is similar to the proof of Theorem \ref{CTcomp} by replacing $\{0\}$ by $Z(L)$.
\end{proof}

\subsection{Lie algebras over finite fields}

We explore some further assertions about a non-commuting graph from a Lie algebra $L$ over a finite field $\mathbb{F}_q$.
When $L$ is AC, the graph $\Gamma_L$ has the following property.

\begin{thm}
    Let $L$ be a finite-dimensional non-abelian AC Lie algebra over $\mathbb{F}_q$. Then $\omega(\Gamma_L) = \chi(\Gamma_L)$.
\end{thm}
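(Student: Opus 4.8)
The plan is to reduce the statement to the standard fact that for a complete multipartite graph both the clique number and the chromatic number coincide with the number of partite sets; the real content has already been packaged into Theorem \ref{ACcomp}.

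First I would invoke Theorem \ref{ACcomp}: since $L$ is a finite-dimensional non-abelian AC Lie algebra, $\Gamma_L$ is a complete multipartite graph whose partite sets are precisely the sets $P(C_L(x)/Z(L))$ for $x \in L \setminus Z(L)$. Because $L$ is defined over the finite field $\mathbb{F}_q$ and $\dim(L/Z(L)) < \infty$, the vertex set $V(\Gamma_L)$ is finite, so $\Gamma_L$ is complete $k$-partite for some finite positive integer $k$; write $V_1, \ldots, V_k$ for the (nonempty) partite sets.

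Next I would compute the two invariants separately. For the clique number: any clique of $\Gamma_L$ contains at most one vertex from each $V_i$, since two vertices in the same partite set are non-adjacent; conversely, selecting one vertex from each $V_i$ yields $k$ pairwise adjacent vertices, because every edge between distinct partite sets is present. Hence $\omega(\Gamma_L) = k$. For the chromatic number: assigning color $i$ to every vertex of $V_i$ is a proper $k$-coloring, as adjacent vertices necessarily lie in different partite sets, so $\chi(\Gamma_L) \leq k$; on the other hand the $k$-clique just exhibited forces $\chi(\Gamma_L) \geq \omega(\Gamma_L) = k$. Combining the two bounds gives $\chi(\Gamma_L) = k = \omega(\Gamma_L)$.

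There is essentially no hard step here once Theorem \ref{ACcomp} is in hand; the equality is the familiar clique-versus-coloring dichotomy for complete multipartite graphs. The only point deserving care is the appeal to the finite-field hypothesis to ensure that $k$, and therefore both graph invariants, are finite integers: without it one would still obtain the equality $\omega(\Gamma_L) = \chi(\Gamma_L)$ as cardinal numbers, but the assumption over $\mathbb{F}_q$ is what lets us read the identity as an equality of natural numbers.
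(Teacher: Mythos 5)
Your proof is correct, but it follows a genuinely different route from the paper's. The paper does not pass through Theorem \ref{ACcomp} at all: instead it takes a maximal clique $[x_1],\ldots,[x_\omega]$, invokes Lemma \ref{eqdefac} to conclude that each centralizer $C_L(x_i)$ is abelian, uses maximality of the clique to show $L = \bigcup_{i=1}^{\omega} C_L(x_i)$ (any element outside the union would extend the clique), and then applies Theorem \ref{chromatic} --- which identifies $\chi(\Gamma_L)$ with the minimum number of abelian subalgebras covering $L$ --- to get $\chi \leq \omega$, the reverse inequality being automatic. Your argument instead packages everything into the structural result that $\Gamma_L$ is complete multipartite and then quotes the standard fact $\omega = \chi = (\text{number of parts})$ for such graphs. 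Each approach has its merits: yours is shorter, purely graph-theoretic once Theorem \ref{ACcomp} is granted, and (as you note) the finite-field hypothesis enters only to make $k$ a natural number, so the cardinal equality survives over infinite fields; the paper's argument, by contrast, yields a concrete algebraic byproduct --- a minimal abelian covering of $L$ of size exactly $\omega(\Gamma_L)$, so the common value of $\omega$ and $\chi$ is also the minimal number of abelian subalgebras whose union is $L$ --- information that your reduction does not surface. Note, though, that the paper's route genuinely needs the finite field, since Theorem \ref{chromatic} is stated only in that setting.
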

\begin{proof}
    Let $\omega = \omega(\Gamma_L)$ and $\chi = \chi(\Gamma_L)$. Suppose that $G$ is a maximal clique in $\Gamma_L$ containing vertices $[x_1], [x_2], \ldots, [x_\omega]$. Since $L$ is AC, by Lemma \ref{eqdefac}, each $C_L(x_i)$ is abelian. Moreover, by the maximality, we have $L = \cup_{i = 1}^\omega C_L(x_i)$. By Theorem \ref{chromatic}, $\chi \leq \omega$. Note that the converse of this always holds for any graph. 
\end{proof}

\begin{rmk}
      Let $L$ be a finite-dimensional non-abelian Lie algebra over $\mathbb{K}$. If $\Gamma_L \cong \Gamma_{\mathfrak{g}}$ for some finite-dimensional Lie algebra $\mathfrak{g}$ over a finite field and $\Gamma_{\mathfrak{g}}$ contains at least two vertices, then by Proposition \ref{basic}, $\mathbb{K}$ is a finite field.  
\end{rmk}

It is worth noting that Question \ref{isomq1} may be highly non-trivial in general. However, for finite graphs, a weaker variant of the question arises naturally:

\begin{quest}\label{isomq2}
    Let $\mathfrak{g}$ and $\mathfrak{h}$ be two finite-dimensional Lie algebras over finite fields such that $\Gamma_{\mathfrak{g}} \cong  \Gamma_{\mathfrak{h}}$. Is it true that $|\mathfrak{g}| = |\mathfrak{h}|$?
\end{quest}

It is clear that a positive answer to this question can be obtained when the two Lie algebras are defined over the same finite field, as shown in the following theorem.

\begin{thm}
    Let $L$ and $\mathfrak{g}$ be two finite-dimensional non-abelian Lie algebras over $\mathbb{F}_{p^{m}}$, where $p$ is a prime and $m \in \mathbb{N}$. If $\Gamma_L \cong \Gamma_{\mathfrak{g}}$, then $|L| = |\mathfrak{g}|$ if and only if $|Z(L)| = |Z(\mathfrak{g})|$.
\end{thm}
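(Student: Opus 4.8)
The plan is to reduce the whole statement to a comparison of dimensions, exploiting that over a fixed finite field both the order of a finite-dimensional space and the size of its projective space are determined by dimension alone. Write $q = p^m$, and set $d_L = \dim(L/Z(L))$ and $d_{\mathfrak{g}} = \dim(\mathfrak{g}/Z(\mathfrak{g}))$.

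First I would count vertices. Since $L/Z(L)$ is an $\mathbb{F}_q$-vector space of dimension $d_L$, the vertex set $V(\Gamma_L) = P(L/Z(L))$ has exactly $\tfrac{q^{d_L} - 1}{q - 1}$ elements, and likewise $|V(\Gamma_{\mathfrak{g}})| = \tfrac{q^{d_{\mathfrak{g}}} - 1}{q - 1}$. A graph isomorphism $\Gamma_L \cong \Gamma_{\mathfrak{g}}$ is in particular a bijection of vertex sets, so these two counts coincide.

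The key step is to upgrade this to $d_L = d_{\mathfrak{g}}$. Since $q \geq 2$, the map $d \mapsto \tfrac{q^d - 1}{q - 1} = 1 + q + \cdots + q^{d-1}$ is strictly increasing in $d$, so equality of vertex counts forces $d_L = d_{\mathfrak{g}}$, that is,
\[
\dim L - \dim Z(L) = \dim \mathfrak{g} - \dim Z(\mathfrak{g}),
\]
or equivalently $\dim L - \dim \mathfrak{g} = \dim Z(L) - \dim Z(\mathfrak{g})$.

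Finally I would translate back to orders. As both algebras are over $\mathbb{F}_q$, we have $|L| = q^{\dim L}$, $|Z(L)| = q^{\dim Z(L)}$, and similarly for $\mathfrak{g}$; hence $|L| = |\mathfrak{g}| \iff \dim L = \dim \mathfrak{g}$ and $|Z(L)| = |Z(\mathfrak{g})| \iff \dim Z(L) = \dim Z(\mathfrak{g})$. The displayed relation $\dim L - \dim \mathfrak{g} = \dim Z(L) - \dim Z(\mathfrak{g})$ then makes these two dimension equalities equivalent, yielding the claim. There is no genuine obstacle: the only substantive point is recognizing that the single combinatorial invariant extractable from the graph --- its number of vertices --- pins down precisely the relative dimension $\dim(L/Z(L))$, after which everything is bookkeeping with powers of $q$.
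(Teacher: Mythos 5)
Your proposal is correct and follows essentially the same route as the paper: equate the projective-space vertex counts via the graph isomorphism, deduce $\dim(L/Z(L)) = \dim(\mathfrak{g}/Z(\mathfrak{g}))$ from strict monotonicity of $d \mapsto \frac{q^d-1}{q-1}$, and then convert the resulting dimension identity into the equivalence of $|L| = |\mathfrak{g}|$ and $|Z(L)| = |Z(\mathfrak{g})|$. The only difference is that you spell out the final bookkeeping that the paper compresses into ``the result follows.''
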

\begin{proof}
    Let $d_1 = \dim(L/Z(L))$ and $d_2 = \dim(\mathfrak{g}/Z(\mathfrak{g}))$.
     Now, assume that $\Gamma_L \cong \Gamma_{\mathfrak{g}}$. Then $|P(L/Z(L))| = |P(\mathfrak{g}/Z(\mathfrak{g}))|$ and so, $\dfrac{p^{m d_1} - 1}{p^m - 1} = \dfrac{p^{m d_2} - 1}{p^m - 1}$. Thus, $d_1 = d_2$, and the result follows.
\end{proof}

\section{Concluding Remark}
In Section \ref{ISO}, we have investigated certain classes of Lie algebras in which two Lie algebras possess the same non-commuting graphs. For future development, one may try to analyze the graph or respond to Question \ref{isomq1} and Question \ref{isomq2} for other types of Lie algebras, such as (finite) nilpotent or simple Lie algebras. 

We observe that AC Lie algebras have a direct relationship with their non-commuting graphs. To gain deeper insights, it is imperative to delve into the theory of AC Lie algebras. However, as far as our knowledge extends, there are no references that specifically mention AC Lie algebras. This type of Lie algebra requires further attention in its own right.

\section*{Acknowledgement}
\noindent 
This project is funded by National Research Council of Thailand (NRCT) and King Mongkut's University of Technology Thonburi (N42A680277).

\end{document}